\tikzstyle{v} = [circle, draw, inner sep=2pt, minimum size=3pt, fill=black]
\tikzstyle{l} = [rectangle, draw, rounded corners]
\newcommand\CA{{\mathscr A}}
\newcommand\CB{{\mathscr B}}
\newcommand\CC{{\mathscr C}}
\renewcommand\CD{{\mathscr D}}
\newcommand\CH{{\mathscr H}}
\newcommand\CS{{\mathcal S}}
\newcommand\CIF{{\mathcal {IF}}}
\newcommand\CIFAC{{\mathcal {IF\!AC}}}
\newcommand\CI{{\mathcal I}}
\newcommand\R{{\varrho}}
\newcommand\RR{{\mathscr R}}
\newcommand\BBC{{\mathbb C}}
\newcommand\BBK{{\mathbb K}}
\newcommand\BBN{{\mathbb N}}
\newcommand\BBQ{{\mathbb Q}}
\newcommand\BBR{{\mathbb R}}
\newcommand\BBZ{{\mathbb Z}}
\newcommand\codim{\operatorname{codim}}
\newcommand\GL{\operatorname{GL}}
\newcommand\rank{\operatorname{rank}}
\newcommand\rk{\operatorname{rk}}
\newcommand\Gen{\operatorname{Gen}}
\newcommand\lc{{\operatorname{lc}}}
\numberwithin{equation}{section}
\theoremstyle{plain}
\newtheorem{lemma}[equation]{Lemma}
\newtheorem{theorem}[equation]{Theorem}
\newtheorem{conjecture}[equation]{Conjecture}
\newtheorem{corollary}[equation]{Corollary}
\newtheorem{prop}[equation]{Proposition}
\theoremstyle{definition}
\newtheorem{defn}[equation]{Definition}
\newtheorem{remark}[equation]{Remark}
\subjclass[2010]{Primary  52C35, 14N20, 32S25, 32S22.}
\begin{document}

\title[Hyperpolygonal arrangements]
{Hyperpolygonal arrangements}

\author[L. Giordani]{Lorenzo Giordani}
\address
{Fakult\"at f\"ur Mathematik,
	Ruhr-Universit\"at Bochum,
	D-44780 Bochum, Germany}
\email{lorenzo.giordani@rub.de}

\author[P.~M\"ucksch]{Paul M\"ucksch}
\address
{Technische Universität Berlin,
	Institut für Mathematik,
	Einsteinufer 19,
	D-10587 Berlin, Germany}
\email{paul.muecksch+uni@gmail.com}

\author[G. R\"ohrle]{Gerhard R\"ohrle}
\address
{Fakult\"at f\"ur Mathematik,
Ruhr-Universit\"at Bochum,
D-44780 Bochum, Germany}
\email{gerhard.roehrle@rub.de}

\author[J. Schmitt]{Johannes Schmitt}
\address
{Fakult\"at f\"ur Mathematik,
	Ruhr-Universit\"at Bochum,
	D-44780 Bochum, Germany}
\email{johannes.schmitt@rub.de}

\keywords{free arrangements, factored arrangements,  
formal arrangements, simplicial arrangements, 
$K(\pi,1)$ arrangements}

\allowdisplaybreaks

\begin{abstract}
In \cite{bellamyetall:crepant},
a particular family of real hyperplane arrangements stemming from  
hyperpolygonal spaces associated with certain 
quiver varieties was introduced which we thus call 
\emph{hyperpolygonal arrangements} $\CH_n$.
In this note we study these
arrangements and 
investigate their properties systematically. 
Remarkably the arrangements 
$\CH_n$ discriminate between essentially all local properties of arrangements.
In addition we show that hyperpolygonal arrangements are projectively unique and combinatorially formal.

We note that the arrangement $\CH_5$
is the famous counterexample of Edelman and Reiner \cite{edelmanreiner:orlik} of Orlik's conjecture that the restriction of a free arrangement is again free. 
\end{abstract}

\maketitle


\section{Introduction and Main Results}

In \cite{bellamyetall:crepant},
Bellamy et al
introduced a particular family of real hyperplane arrangements in $\BBR^n$ stemming from 
hyperpolygon spaces realized as certain 
quiver varieties which we call 
\emph{hyperpolygonal arrangements} $\CH_n$, see Definition \ref{def:HA}.
In this note we study these
hyperpolygonal arrangements and 
investigate their properties in a systematic manner. 
It turns out that the arrangements 
$\CH_n$ differentiate essentially between all local properties of arrangements, see Theorem \ref{thm:HA}.
In addition we show that hyperpolygonal arrangements are projectively unique, see Theorem \ref{thm:HAiscomb}, and combinatorially formal, see Theorem \ref{thm:HAformal}.

We briefly indicate how the arrangements \(\CH_n\) arise in \cite{bellamyetall:crepant} and show their connection to birational geometry, see \cite{bellamyetall:crepant} for details and references.
The arrangement \(\CH_n\) characterizes stability conditions on the parameter \(\theta\) of the hyperpolygon space \(X_n(\theta)\).
Namely, \(X_n(\theta)\) is smooth if and only if \(\theta\) does not lie on any hyperplane in \(\CH_n\).
As explained in \cite{bellamyetall:crepant}, the varieties \(X_n(0)\) are \emph{conical symplectic varieties} and the map \(X_n(\theta) \to X_n(0)\) is a crepant projective resolution (hence a symplectic projective resolution), if \(\theta\) does not lie in \(\CH_n\).
Furthermore, two such resolutions \(X_n(\theta_1) \to X_n(0)\) and \(X_n(\theta_2) \to X_n(0)\) are isomorphic if \(\theta_1\) and \(\theta_2\) lie in the same region of the complement of \(\CH_n\)  in $\BBR^n$.
This construction works for all conical symplectic varieties by Namikawa \cite{Nam15}:
For a conical symplectic variety \(Y\), there is a certain ``parameter space'' containing a hyperplane arrangement that characterizes the isomorphism classes of crepant projective resolutions (or, in general, the \(\BBQ\)-factorial terminalizations) of \(Y\).
Very little is known about the hyperplane arrangements arising in this way, although they are essential in understanding the birational geometry of \(Y\), see also \cite{Bel16, BST18}.
Special cases of conical symplectic varieties are \emph{symplectic quotient singularities} \(V/G\), where \(V\) is a symplectic vector space over \(\BBC\) and \(G\leq\operatorname{Sp}(V)\) is a finite group maintaining the symplectic form.
By \cite{bellamyetall:crepant}, the hyperpolygon space \(X_n(0)\) is not a quotient singularity for \(n > 5\).
In contrast, $X_4(0)$ is the Kleinian singularity of type $D_4$, that is, the quotient of $\BBC^2$ by the quaternion group $Q_8$.  Further, 
\(X_5(0)\) is the quotient of \(\BBC^4\) by a symplectic reflection group of order 32.
This quotient was extensively studied in \cite{BS13} and \cite{DW17}.

In this note we study the hyperpolygonal arrangements $\CH_n$ in a systematic manner. 
We note that $\CH_5$
is the famous counterexample of Edelman and Reiner \cite{edelmanreiner:orlik} of Orlik's conjecture that the restriction of a free arrangement is again free. 

We first recall the definition from
\cite{bellamyetall:crepant}.
Fix $n \in \BBN$. Let $V = \BBR^n$. Let $x_1, \ldots, x_n$ be the dual basis  in $V^*$ of the standard $\BBR$-basis of $V$.
For $I \subseteq [n] = \{1, \ldots, n\}$, define the hyperplane
\[H_I 
:= \ker \left(\sum_{i \in I}x_i - \sum_{j \in [n]\setminus I}x_j\right)
\]
in $V$.

\begin{defn}
	\label{def:HA} 
	With the notation as above, the \emph{hyperpolygonal arrangement} $\CH_n$ in $V$ is defined as
	\[\CH_n := \{\ker x_i \mid i \in [n]\} \cup \{H_I \mid  \varnothing \neq I \subseteq [n]\}.
	\]
\end{defn}

For the various notions used in our main theorem, we refer the reader to Section \ref{sect:prelims}.

\begin{theorem}
	\label{thm:HA}
	Fix $n \in \BBN$. Then we have 
	\begin{itemize}
		\item[(i)]  $\CH_n$ is supersolvable if and only if $n\le 2$;
		\item[(ii)]  $\CH_n$ is inductively factored if and only if $n\le 3$;
		\item[(iii)]  $\CH_n$ is inductively free if and only if $n\le 4$;
		\item[(iv)] $\CH_n$ is free if and only if $n\le 5$;
		\item[(v)]  $\CH_n$ is simplicial if and only if $n\le 4$;
		\item[(vi)] $\CH_n$ is not $K(\pi,1)$ if $n\ge 6$.		
	\end{itemize}
\end{theorem}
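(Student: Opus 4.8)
The plan is to organise the six statements around the implications
\[
\text{supersolvable}\ \Rightarrow\ \text{inductively factored}\ \Rightarrow\ \text{inductively free}\ \Rightarrow\ \text{free}
\]
together with Deligne's theorem that a simplicial real arrangement is $K(\pi,1)$. This reduces the theorem to a small number of explicit base cases (the largest admissible $n$ in each ``if''\nobreakdash-part) plus a propagation of each ``only if''\nobreakdash-part down the chain. Three structural facts carry out the propagation. (a) Using $H_I=H_{[n]\setminus I}$ one has the compact description
\[
\CH_n=\{\ker x_1,\dots,\ker x_n\}\cup\{\,\ker(x_1+\epsilon_2 x_2+\cdots+\epsilon_n x_n)\mid\epsilon\in\{\pm1\}^{n-1}\,\},
\]
so the signed permutation group $B_n$ acts on $\CH_n$ with exactly two orbits of hyperplanes, $\{\ker x_i\}$ and $\{H_I\}$; this cuts every ``check all hyperplanes'' down to two cases. (b) Restriction to a coordinate hyperplane gives $\CH_n^{\ker x_i}\cong\CH_{n-1}$, and restriction to an $H_I$ gives $\CH_n^{H_I}\cong\CR_{n-1}$, the resonance (all-subsets) arrangement of rank $n-1$. (c) Localising $\CH_n$ along a flat that ``merges'' coordinates exhibits, for every $n\ge 6$, the fixed rank-$4$ arrangement $\CD:=\{\ker x_3,\ker x_4\}\cup\{\,\ker(x_1\pm x_2\pm x_3\pm x_4)\,\}$ (all four sign choices; i.e.\ $\CH_4$ with two coordinate hyperplanes removed) as a localisation of $\CH_n$. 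Since freeness, inductive freeness, inductive factoredness and $K(\pi,1)$ all pass to localisations, a single defect of $\CD$ propagates to all $n\ge 6$.

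For the ``if'' directions: when $n\le 2$ the arrangement has rank $\le 2$, hence is supersolvable and simplicial, and (i)--(vi) all hold. For $n=3$ one writes down an explicit nice chain of partitions realising $\CH_3$ (the $7$-plane arrangement $\{x_1,x_2,x_3\}\cup\{x_1\pm x_2\pm x_3\}$, with $\chi(\CH_3,t)=(t-1)(t-3)^2$) as inductively factored, and checks directly that $\CH_3$ is simplicial. For $n=4$ one exhibits an explicit inductive chain for $\CH_4$ and verifies simpliciality of $\CH_4$ (conveniently by computer). For $n=5$, $\CH_5$ is the Edelman--Reiner arrangement \cite{edelmanreiner:orlik}, which is free, and one records a basis of $D(\CH_5)$. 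Deligne's theorem then yields $K(\pi,1)$ for $n\le 4$.

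For the ``only if'' directions (i)--(v): for (i) it suffices to see $\CH_3$ is not supersolvable (no point of the $7$-line arrangement above is joined to every other point by an arrangement line), and for $n\ge 4$ non-supersolvability is immediate from (ii). For (ii) one shows $\CH_4$ is not even factored -- it admits no nice partition -- a finite check cut down by the $B_4$-symmetry and conveniently done by computer; for $n\ge 5$ non-inductive-factoredness follows from (iii). For (iii) one shows $\CH_5$, though free, is not inductively free: in an inductive chain the last hyperplane added is, by the two-orbit structure, either $\ker x_i$, with $\CH_5^{\ker x_i}\cong\CH_4$, or $H_I$, with $\CH_5^{H_I}\cong\CR_4$; in the first case the exponent bookkeeping $\exp(\CH_5)=\exp(\CH_4)\cup\{|\CH_5|-|\CH_4|\}$ forced by Addition--Deletion does not hold, and in the second $\CR_4$ is not even free (this is precisely the Edelman--Reiner failure of Orlik's conjecture for $\CH_5$); for $n\ge 6$ non-inductive-freeness follows from (iv). For (iv), the case $n=5$ is Edelman--Reiner again, and for $n\ge 6$ one shows the localisation $\CD$ is not free, e.g.\ by computing $\chi(\CD,t)$ and observing it does not split into integer linear factors (or by a direct computation with $D(\CD)$). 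For (v), restriction to a hyperplane preserves simpliciality of a real arrangement (a chamber of $\CA^H$ is a facet of a chamber of $\CA$, and facets of simplicial cones are simplicial), so iterating $\CH_n^{\ker x_i}\cong\CH_{n-1}$ shows $\CH_5$ not simplicial $\Rightarrow$ $\CH_n$ not simplicial for all $n\ge 5$; and one exhibits a chamber of $\CH_5$ with more than $5$ walls.

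The main obstacle is (vi). Since $K(\pi,1)$ is inherited by localisations, it suffices to show that $\CD$ -- present in $\CH_n$ for every $n\ge 6$ -- is not $K(\pi,1)$. This non-asphericity cannot be detected in corank one: every rank-$\le 3$ localisation of $\CH_n$ is isomorphic to a subarrangement of $\CH_3$, at most seven lines, hence supersolvable or simplicial and in particular $K(\pi,1)$. One is therefore forced into a genuinely topological rank-$4$ argument: the plan is to show the complement $M(\CD)$ (equivalently $M(\CH_6)$) has nonzero $\pi_2$, via its minimal $CW$ model -- computing the attaching map of the $3$-cells, or equivalently the failure of the Orlik--Solomon/Aomoto complex to carry the (co)homology forced by asphericity -- or alternatively by recognising $\CD$ (or a localisation of $\CH_6$) as a known non-$K(\pi,1)$ arrangement. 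Establishing this rank-$4$ non-asphericity, and thereby confirming that the phenomenon first appears at $n=6$, is where the real work lies; everything else reduces to the base-case computations and the soft propagation arguments above.
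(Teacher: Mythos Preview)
The decisive gap is in (vi). Your assertion that ``every rank-$\le 3$ localisation of $\CH_n$ is isomorphic to a subarrangement of $\CH_3$, \dots\ hence supersolvable or simplicial and in particular $K(\pi,1)$'' fails in its conclusion: subarrangements of a simplicial (or $K(\pi,1)$) arrangement need not be simplicial (or $K(\pi,1)$). In fact non-asphericity \emph{is} detectable in rank~$3$. For $n\ge 6$ take $I_1=\{1\}$, $I_2=\{1,2,3\}$, $I_3=\{1,4,5\}$, $I_4=\{1,2,3,4,5\}$ and $X=\bigcap_{i=1}^4 H_{I_i}\in L(\CH_n)$; one checks that $(\CH_n)_X=\{H_{I_1},\dots,H_{I_4}\}$ is a \emph{generic} rank-$3$ arrangement (four hyperplanes, any three independent, the unique relation being $\alpha_{I_1}+\alpha_{I_4}=\alpha_{I_2}+\alpha_{I_3}$). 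Generic arrangements with more hyperplanes than the rank are neither free nor $K(\pi,1)$ (Hattori), so locality of asphericity gives (vi) at once. No rank-$4$ topological argument is needed, and the entire paragraph proposing to compute $\pi_2$ of $M(\CD)$ can be dropped. The same generic localisation also delivers non-freeness and non-simpliciality for all $n\ge 6$, making your auxiliary arrangement $\CD$ unnecessary; incidentally, its description ``all four sign choices'' should read ``all eight'', and its realisability as a localisation of $\CH_n$ via ``merging coordinates'' is not evident and would itself require proof.

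For parts (i)--(v) your outline is broadly sound but takes a different route from the paper. Rather than direct chamber and partition checks, the paper identifies $\CH_3$ with the connected-subgraph arrangement $\CA_{C_3}$ and $\CH_4$ with the Weyl arrangement $\CA(D_4)$ via explicit linear maps, and then quotes known results: inductive factoredness of $\CA_{C_3}$, non-niceness and inductive freeness of $\CA(D_4)$, and Edelman--Reiner for $\CH_5$ (free but not inductively free). Simpliciality for $n\le 4$ and its failure for $n=5$ are read off from the Cuntz--Geis characteristic-polynomial identity rather than by inspecting chambers. Your restriction argument $\CH_n^{\ker x_i}\cong\CH_{n-1}$ for propagating non-simpliciality is a legitimate and pleasant alternative to the paper's localisation argument; and your two-orbit analysis of the last hyperplane in a putative inductive chain for $\CH_5$ is essentially the Edelman--Reiner computation, which can simply be cited.
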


It follows from Theorem \ref{thm:HA}(v) and Remark \ref{rem:kpione}(i) that $\CH_n$ is $K(\pi,1)$ for $n \le 4$ and $\CH_n$ fails to be  $K(\pi,1)$ for $n \ge 6$ by part (vi). It is not known whether $\CH_5$ is $K(\pi,1)$.

In general, $K(\pi,1)$ arrangements need not be free, e.g.~see \cite[Fig.~5.4]{orlikterao:arrangements}. However, for hyperpolygonal arrangements, this does seem to be the case.
While $\CH_5$ is free, it is not known whether $\CH_5$ is $K(\pi,1)$. We can thus formulate

\begin{corollary}
	\label{cor:kpi1}
	With the possible exception when $n=5$, 
	$\CH_n$ is $K(\pi,1)$ if and only if  $\CH_n$ is free. 
\end{corollary}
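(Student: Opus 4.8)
The plan is to deduce Corollary \ref{cor:kpi1} by collating the relevant parts of Theorem \ref{thm:HA} with the standard implications among the local properties listed there, together with the remark on $K(\pi,1)$-ness alluded to in the text. The statement to prove is a biconditional, for all $n \neq 5$, between ``$\CH_n$ is $K(\pi,1)$'' and ``$\CH_n$ is free.'' I would split the argument by the value of $n$ into three ranges: $n \le 4$, $n = 5$ (excluded), and $n \ge 6$.

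For $n \le 4$, I would argue that both sides of the biconditional hold, so the equivalence is vacuously true. Freeness for $n \le 4$ follows from Theorem \ref{thm:HA}(iv) (or already from (iii), inductive freeness implying freeness). That $\CH_n$ is $K(\pi,1)$ for $n \le 4$ is exactly the consequence of Theorem \ref{thm:HA}(v) and Remark \ref{rem:kpione}(i) spelled out in the paragraph after Theorem \ref{thm:HA}: a simplicial (real) arrangement is $K(\pi,1)$ by Deligne's theorem, and $\CH_n$ is simplicial for $n \le 4$. Hence for $n \le 4$ both conditions hold and the ``if and only if'' is immediate.

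For $n \ge 6$, I would argue that both sides fail. Theorem \ref{thm:HA}(iv) gives that $\CH_n$ is not free for $n \ge 6$, and Theorem \ref{thm:HA}(vi) gives that $\CH_n$ is not $K(\pi,1)$ for $n \ge 6$. So again the biconditional holds because both sides are false. Combining the two ranges $n \le 4$ and $n \ge 6$ — which together exhaust all $n \in \BBN$ with $n \neq 5$ — yields the claim, with $n = 5$ explicitly set aside since there $\CH_5$ is free (Theorem \ref{thm:HA}(iv)) but its $K(\pi,1)$-ness is open.

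I do not anticipate a genuine obstacle here: the corollary is purely a bookkeeping consequence of Theorem \ref{thm:HA} and the cited remark, with no new content beyond pointing out that the thresholds for freeness ($n\le5$) and for $K(\pi,1)$-ness ($n\le 4$, with $n=5$ undecided, failing for $n\ge6$) happen to agree away from $n = 5$. The only point requiring a line of justification — rather than a bare citation — is the implication ``simplicial $\Rightarrow$ $K(\pi,1)$'' used in the small cases, which is Deligne's theorem and is presumably recorded in Remark \ref{rem:kpione}; I would simply cite it there.
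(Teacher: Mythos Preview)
Your proposal is correct and mirrors the paper's own argument: the paragraph immediately preceding Corollary~\ref{cor:kpi1} already records that $\CH_n$ is $K(\pi,1)$ for $n\le 4$ (via Theorem~\ref{thm:HA}(v) and Remark~\ref{rem:kpione}(i)) and fails to be $K(\pi,1)$ for $n\ge 6$ (Theorem~\ref{thm:HA}(vi)), which together with Theorem~\ref{thm:HA}(iv) gives the biconditional away from $n=5$ exactly as you describe.
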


Thus with the possible exception of $\CH_5$, the hyperpolygonal arrangements 
$\CH_n$ satisfy Saito's Conjecture that for a complexified arrangement freeness implies $K(\pi,1)$. Of course, the latter is know to be false in general \cite{edelmanreiner:saito}.

Theorem \ref{thm:HAiscomb}(ii) and (v) imply the following which  
is in support of a conjecture due to Falk and Randell namely that every (complex) factored  arrangement is  $K(\pi,1)$, \cite[Probl.~3.12]{falkrandell:homotopyII}.

\begin{corollary}
	\label{cor:kpi1-2}
	If $\CH_n$ is factored, then $\CH_n$  is $K(\pi,1)$. 
\end{corollary}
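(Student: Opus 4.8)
The plan is to obtain the corollary by combining the classification of when $\CH_n$ is factored with the fact, already recorded in the discussion following Theorem \ref{thm:HA}, that $\CH_n$ is $K(\pi,1)$ whenever $n\le 4$. In other words, for hyperpolygonal arrangements ``factored'' turns out to be so restrictive a condition that it can only hold in the range of $n$ for which the complement is already known to be a $K(\pi,1)$.

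Concretely, suppose $\CH_n$ is factored. Then the classification of factoredness for hyperpolygonal arrangements, Theorem \ref{thm:HAiscomb}(ii), forces $n$ to be small; in particular $n\le 4$. (I would expect the sharp statement to be that $\CH_n$ is factored if and only if $n\le 3$, so that even $\CH_4$ is excluded, but only $n\le 4$ is needed here.) For such $n$, Theorem \ref{thm:HA}(v) says that $\CH_n$ is simplicial, and Deligne's theorem that the complexification of a real simplicial arrangement is $K(\pi,1)$---Remark \ref{rem:kpione}(i)---yields that $\CH_n$ is $K(\pi,1)$. This is the implication recorded as Theorem \ref{thm:HAiscomb}(v), and the corollary follows at once.

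The entire weight of the argument therefore lies in the factoredness classification of Theorem \ref{thm:HAiscomb}(ii), and the step I expect to be the main obstacle is showing that $\CH_n$ admits no nice partition once $n$ exceeds the threshold. For large $n$ one may hope for a numerical obstruction: a nice partition $\CP=(\CP_1,\dots,\CP_\ell)$ of an essential $\ell$-arrangement $\CA$ has exactly $\ell=\rk\CA$ blocks, and Terao's factorization theorem forces $\pi(\CA,t)=\prod_{i=1}^{\ell}(1+|\CP_i|\,t)$ with $\sum_i|\CP_i|=|\CA|$, so if the Poincar\'e polynomial $\pi(\CH_n,t)$ does not split into linear factors over $\BBZ$ with non-negative integer roots there can be no nice partition. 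The delicate case is $n=5$: since $\CH_5$ is free by Theorem \ref{thm:HA}(iv), its Poincar\'e polynomial does split with non-negative integer roots---the negatives of the exponents of $\CH_5$---so the crude test is inconclusive, and the block sizes of any putative nice partition are pinned down to be exactly those exponents. One must then rule out such a partition by a finer analysis: running through the finitely many admissible block-size multisets and showing that each violates the constraints a nice partition has to induce on the localizations $(\CH_5)_X$ (each rank-two flat must meet exactly two blocks, each rank-three flat must inherit a nice partition into exactly three blocks, and one representative chosen per block must form an independent set), or else by a direct computer-assisted verification. Once $\CH_n$ is known to be non-factored for all $n\ge 5$---equivalently, that being factored forces $n\le 4$---the corollary follows as above.
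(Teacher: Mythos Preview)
Your approach is exactly the paper's: deduce from the classification that factoredness of $\CH_n$ forces $n\le 4$ (indeed $n\le 3$), then invoke Theorem~\ref{thm:HA}(v) and Deligne's theorem (Remark~\ref{rem:kpione}(i)) to conclude $K(\pi,1)$. Note that the paper's citation ``Theorem~\ref{thm:HAiscomb}(ii) and (v)'' preceding the corollary is a typo for Theorem~\ref{thm:HA}(ii) and (v), which you have partly inherited; your caution about the $n=5$ case is well placed, though the paper handles this only implicitly via the facts recorded in the proof of Theorem~\ref{thm:HA} (e.g.\ that $\CA(D_4)\cong\CH_4$ is not factored and the generic-localization obstruction for $n\ge 6$).
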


A property for arrangements is said to be \emph{combinatorial} if it only depends on the intersection lattice of the underlying arrangement.
In this context our next theorem shows that the class of hyperpolygonal arrangements is
very special in the sense that essentially every property
we may formulate for members of this class is
combinatorial. This is formally captured by the notion of \emph{projective uniqueness} due to Ziegler \cite{ziegler:matroid}, see Definition \ref{def:projunique}.

\begin{theorem}
	\label{thm:HAiscomb}
	For any $n \in \BBN$,  $\CH_n$ is projectively unique.
\end{theorem}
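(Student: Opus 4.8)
The plan is to show that the combinatorial type of $\CH_n$ — that is, its intersection lattice — determines the arrangement up to a projective transformation. By the definition of projective uniqueness, it suffices to produce, for any realization $\CH$ of the same matroid as $\CH_n$, a projective transformation carrying $\CH$ to $\CH_n$. First I would exhibit a distinguished subconfiguration inside $\CH_n$ whose projective frame is combinatorially pinned down: the $n$ coordinate hyperplanes $\ker x_i$ together with, say, the hyperplane $H_{[n]} = \ker(x_1 + \cdots + x_n)$ form (the projectivization of) a simplex with one extra hyperplane in general position, which is projectively rigid — any two such configurations are related by a unique projective transformation. So after applying a projective transformation we may assume $\CH$ agrees with $\CH_n$ on these $n+1$ hyperplanes, and it remains to show every other hyperplane of $\CH$ is then forced to coincide with the corresponding hyperplane of $\CH_n$.

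The key step is to recover each $H_I$ for $\varnothing \neq I \subsetneq [n]$ from incidence data alone. In $\CH_n$, the hyperplane $H_I = \ker(\sum_{i\in I} x_i - \sum_{j\notin I} x_j)$ has a rich pattern of intersections with the coordinate hyperplanes and with $H_{[n]}$: for instance $H_I$ and $H_{[n]}$ meet the coordinate hyperplanes in codimension-two flats whose mutual incidences encode exactly which coordinates lie in $I$. Concretely, $H_I \cap H_{[n]}$ lies in $\ker x_k$ precisely for a combinatorially determined set of indices $k$, and this, together with the already-fixed coordinate frame, determines $H_I$ as the unique hyperplane through a prescribed collection of codimension-two flats. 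I would make this precise by checking that the flats $H_I \cap \ker x_i$ ($i \in [n]$) span enough of the lattice that $H_I$ is the unique hyperplane containing the appropriate $n-1$ of them — a linear-algebra count using that the defining functionals of the coordinate hyperplanes are a basis of $V^*$.

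The main obstacle I anticipate is bookkeeping: one must verify that the incidences actually distinguish the sign pattern of $H_I$ (i.e.\ the subset $I$) and do not merely recover the unoriented hyperplane up to the symmetry $I \leftrightarrow [n]\setminus I$, which is harmless since $H_I = H_{[n]\setminus I}$, but also that no two distinct admissible subsets yield the same incidence data with the fixed frame. A clean way around the casework is to argue uniformly: once the coordinate frame is fixed, every hyperplane $H$ of $\CH$ that is not a coordinate hyperplane is determined by the $n-1$ flats $H \cap \ker x_i$ it meets among the coordinate hyperplanes (generically $H$ meets all $n$ of them, and any $n-1$ suffice to pin down $H$), and each such flat is in turn a combinatorial invariant of the lattice; hence $H$ is forced. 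Finally I would remark that this argument in fact shows the stronger statement that $\CH_n$ has no nontrivial combinatorial symmetries beyond those induced by permutations of coordinates and the relations $H_I = H_{[n]\setminus I}$, but only projective uniqueness is needed here.
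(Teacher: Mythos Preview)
Your overall strategy---fix an $(n{+}1)$-hyperplane projective frame and then argue the remaining hyperplanes are forced by lattice incidences---is exactly the shape of the paper's proof. The gap is in the ``hence $H$ is forced'' step. You assert that each flat $H_I\cap\ker x_i$ is ``a combinatorial invariant of the lattice,'' but knowing a flat as a lattice element does not tell you where it sits in $V$; for that you need it to be an intersection of hyperplanes you have \emph{already} placed. For $2\le |I|\le n-2$ the localization $(\CH_n)_{H_I\cap\ker x_i}$ is $\{\ker x_i,\,H_I,\,H_{I\triangle\{i\}}\}$, and only $\ker x_i$ lies in your frame, so this codimension-two subspace is not yet determined. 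Your other idea---checking when $H_I\cap H_{[n]}\subseteq\ker x_k$---only fires when $|I|\in\{1,n-1\}$, so it does not cover the general case either. In fact, with your frame every flat of $L(\text{frame})$ contained in $H_I$ already lies in $H_{[n]}$, so no sum of such flats can fill out the hyperplane $H_I$; your frame does \emph{not} generate $\CH_n$ in the sense the paper uses. (One can still push your frame through, but it requires a global argument propagating constraints among all the $H_I$ simultaneously---not the local ``$n{-}1$ flats suffice'' argument you sketch.)

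The paper sidesteps this by choosing a different set of $n+1$ hyperplanes: $\ker x_1,\ldots,\ker x_{n-1}$ together with \emph{two} non-coordinate hyperplanes $H_\alpha=\ker(\sum_i x_i)$ and $H_\beta=\ker(x_1+\cdots+x_{n-1}-x_n)$. With this choice every $H_I$ (normalizing so $n\notin I$) is a genuine sum of two frame-lattice flats,
\[
H_I \;=\; \Bigl(H_\alpha\cap\bigcap_{i\in I}\ker x_i\Bigr)\;+\;\Bigl(H_\beta\cap\bigcap_{j\in[n-1]\setminus I}\ker x_j\Bigr),
\]
with complementary dimensions $n-|I|-1$ and $|I|$. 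Both summands are intersections of frame hyperplanes, hence already determined as subspaces, so $H_I$ is forced in one step with no iteration. The paper then invokes a general criterion (an essential irreducible arrangement generated in this sense by $\ell+1$ hyperplanes is projectively unique). The moral difference: having two non-coordinate hyperplanes in the frame lets you escape $H_\alpha$ and span each $H_I$ directly, whereas one non-coordinate hyperplane traps all usable frame-flats inside it.
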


Theorem \ref{thm:HAiscomb} implies that 
for the class of all real arrangements whose underlying matroid admits a realization over $\BBR$ as a hyperpolygonal arrangement 
freeness is combinatorial. In particular, Terao's conjecture over $\BBR$ is valid within this class, cf.~\cite[Prop.~2.3]{ziegler:matroid}. Likewise, asphericity is combinatorial within this class. Whether both these properties are combinatorial in general are longstanding and wide open problems, see  \cite[Conj.~4.138]{orlikterao:arrangements} and \cite[Prob.~3.8]{falkrandell:homotopyII}.

\bigskip

A hyperplane arrangement is called \emph{formal} provided all linear dependencies among the defining forms of the hyperplanes are generated by ones corresponding to intersections  of codimension two.
The significance of this notion stems from the fact that
complex arrangements with aspherical complements are formal, \cite[Thm.\ 4.2]{falkrandell:homotopy}.
In addition, free arrangements are known to be formal,
\cite[Cor.~2.5]{yuzvinsky:obstruction},
and factored arrangements are formal, \cite[Thm.~1.1]{moellermueckschroehre:formal}.
Thus all the properties studied in Theorem \ref{thm:HA} entail formality.  
In our next result we show that indeed  
all hyperpolygonal arrangements are \emph{combinatorially formal}, see Definition \ref{def:combformal}.

\begin{theorem}
	\label{thm:HAformal}
		For any $n \in \BBN$,  $\CH_n$ is combinatorially formal.
\end{theorem}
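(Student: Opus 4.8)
The plan is to prove combinatorial formality by a direct analysis of the linear dependencies among the defining forms of $\CH_n$, showing that all of them are generated by dependencies supported on rank-$2$ flats, and crucially that this generation uses only data visible in the intersection lattice $L(\CH_n)$. First I would set up the relevant space: the defining forms of $\CH_n$ are the $x_i$ and the $\alpha_I := \sum_{i\in I}x_i - \sum_{j\notin I}x_j$ for $\varnothing\neq I\subseteq[n]$. Note the basic identities $\alpha_I + \alpha_{[n]\setminus I} = 0$ (when $[n]\setminus I\neq\varnothing$) and $\alpha_{[n]\setminus\{k\}} = 2x_k - \alpha_{[n]}$, as well as $\alpha_I - \alpha_J = 2\sum_{i\in I\setminus J}x_i - 2\sum_{j\in J\setminus I}x_j$ when $I,J$ have symmetric difference involving few indices; these give many explicit codimension-two dependencies. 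The goal is to show the space of all dependencies among the $|\CH_n| = n + 2^{n-1}$ forms (which has dimension $|\CH_n| - n$, since the forms span $V^* \cong \BBR^n$) is spanned by the three-term and four-term dependencies coming from rank-$2$ intersections.

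The key steps, in order, would be: (1) Identify all rank-$2$ flats $X\in L_2(\CH_n)$ and, for each, record the sub-dependency space $D(\CH_n^X)$ of forms vanishing on $X$; by Definition \ref{def:combformal} of combinatorial formality I must track these purely lattice-theoretically. (2) Show that the forms $x_1,\dots,x_n$ together with $\alpha_{\{i\}} = x_i - \sum_{j\neq i}x_j$ already exhibit, via triples $\{x_i : i\neq k\}\cup\{x_k,\alpha_{\{k\}}\}$ or similar, enough codimension-two relations; more systematically, for any $I$ with $|I|\geq 2$ pick $i_0\in I$ and use the rank-two flat $\ker x_{i_0}\cap\ker\alpha_I$ — wait, these need not meet in codimension two — so instead use pairs $\alpha_I,\alpha_{I'}$ where $I' = I\,\triangle\,\{i\}$ differs in one element, giving $\alpha_I - \alpha_{I'} = \pm 2x_i$, a genuine rank-$2$ relation, and check these connect every $\alpha_I$ to the "small" forms. (3) Run an induction on $|I|$: assuming every $\alpha_J$ with $|J|<|I|$ lies in the span of $\{x_1,\dots,x_n\}$ modulo rank-$2$ dependencies already accounted for, deduce the same for $\alpha_I$ using a one-element-flip relation to some $\alpha_J$ with smaller $|J|$ (or to $x_i$ directly). (4) Combine to conclude that an arbitrary dependency, after subtracting rank-$2$ dependencies, becomes a dependency among the linearly independent forms $x_1,\dots,x_n$, hence is zero — and verify each subtraction was combinatorial, i.e.\ the coefficients were forced by incidences in $L(\CH_n)$ rather than by the specific coordinates.

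I expect the main obstacle to be step (4), the \emph{combinatorial} bookkeeping rather than the mere existence of a rank-$2$ generating set. Formality over $\BBR$ is automatic here for $n\leq 5$ (those $\CH_n$ are free, hence formal by \cite[Cor.~2.5]{yuzvinsky:obstruction}), so the content is for $n\geq 6$ where one must show the relation module's rank-two generation is lattice-determined. The subtlety is that two arrangements with the same intersection lattice as $\CH_n$ (which by Theorem \ref{thm:HAiscomb} are projectively equivalent, so in fact \emph{linearly} equivalent up to the ambient $\mathrm{GL}$) must have formality transported between them; one clean route is therefore to invoke Theorem \ref{thm:HAiscomb} directly: projective uniqueness means the realization space is a single $\mathrm{PGL}$-orbit, so the dimension of the rank-$2$-generated subspace of the dependency module is constant across all realizations, and since it equals the full dependency module for the standard realization (by steps (1)--(3)), it does so for all of them — which is exactly combinatorial formality. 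I would hedge by presenting both the hands-on dependency argument and this shortcut via projective uniqueness, leading with whichever the referees find more transparent.
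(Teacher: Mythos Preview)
Your proposal is correct, and the rank-$2$ relations you isolate --- the one-element flips $\alpha_I - \alpha_{I\triangle\{i\}} = \pm 2x_i$ --- are exactly the incidences the paper exploits. The packaging, however, is different. The paper does not argue directly with the dependency module or invoke projective uniqueness; instead it applies the lc-basis criterion of \cite{moellermueckschroehre:formal} (recorded here as Proposition~\ref{prop:lcbasis}): it exhibits the rank-$n$ set $\CB = \{\ker x_1,\ldots,\ker x_{n-1},\ker(x_1+\cdots+x_n)\}$ and checks, by the same one-step flips you found, that its line-closure is all of $\CH_n$. Since line-closure is a purely lattice-theoretic operation, this bypasses your step~(4) entirely and yields combinatorial formality in one stroke, independent of Theorem~\ref{thm:HAiscomb}.

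Your shortcut via projective uniqueness is a genuine alternative and is valid: once steps (1)--(3) establish ordinary formality of the standard realization, projective uniqueness forces every $L$-equivalent real arrangement to be linearly isomorphic to it, hence also formal. What this buys is a conceptual separation (formality $+$ rigidity $\Rightarrow$ combinatorial formality); what it costs is the dependence on Theorem~\ref{thm:HAiscomb} and the restriction to real realizations, whereas the lc-basis argument is self-contained and field-agnostic. One caution on your hands-on route: the Boolean subarrangement $\{\ker x_1,\ldots,\ker x_n\}$ is already line-closed in $\CH_n$ for $n\geq 3$ (no $H_I$ passes through any $\ker x_i\cap\ker x_j$), so your induction must start from a set containing at least one $H_I$ --- exactly as the paper's choice of $\CB$ does.
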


Theorem \ref{thm:HAformal} is proved in \S \ref{s:thm:HAformal}, based on results from \cite{moellermueckschroehre:formal}.

We end in \S \ref{s:rankgenerating} with a brief discussion of the rank generating functions of the poset of regions of the free hyperpolygonal arrangements  $\CH_n$.

For general information about arrangements
we refer the reader to
\cite{orlikterao:arrangements}.

\section{Preliminaries}
\label{sect:prelims}

\subsection{Hyperplane arrangements}
\label{ssect:arrangements}
Let $\BBK$ be a field and let 
$V = \BBK^n$
be an $n$-dimensional $\BBK$-vector space.
A \emph{hyperplane arrangement} is a pair
$(\CA, V)$, where $\CA$ is a finite collection of hyperplanes in $V$.
Usually, we simply write $\CA$ in place of $(\CA, V)$.

The \emph{lattice} $L(\CA)$ of $\CA$ is the set of subspaces of $V$ of
the form $H_1\cap \ldots \cap H_i$ where $\{ H_1, \ldots, H_i\}$ is a subset
of $\CA$.
For $X \in L(\CA)$, we have two associated arrangements,
firstly
$\CA_X :=\{H \in \CA \mid X \subseteq H\} \subseteq \CA$,
the \emph{localization of $\CA$ at $X$},
and secondly,
the \emph{restriction of $\CA$ to $X$}, $(\CA^X,X)$, where
$\CA^X := \{ X \cap H \mid H \in \CA \setminus \CA_X\}$.
The lattice $L(\CA)$ is a partially ordered set by reverse inclusion:
$X \le Y$ provided $Y \subseteq X$ for $X,Y \in L(\CA)$.

Throughout, we only consider arrangements $\CA$
such that $0 \in H$ for each $H$ in $\CA$.
These are called \emph{central}.
In that case the \emph{center}
$T(\CA) := \cap_{H \in \CA} H$ of $\CA$ is the unique
maximal element in $L(\CA)$  with respect
to the partial order.
A \emph{rank} function on $L(\CA)$
is given by $r(X) := \codim_V(X)$.
The \emph{rank} of $\CA$
is defined as $r(\CA) := r(T(\CA))$.

The \emph{Poincar\'e polynomial} 
$\pi(\CA,t) \in \BBZ[t]$ of $\CA$ is defined by 
\[
\pi(\CA,t) := \sum_{X \in L(\CA)} \mu(X)(-t)^{r(X)},
\]
and the \emph{characteristic polynomial} 
$\chi(\CA,t) \in \BBZ[t]$ 
of $\CA$ is defined by 
\[
\chi(\CA,t) := t^\ell \pi(\CA,-t^{-1}) = \sum_{X \in L(\CA)} \mu(X)t^{\dim X},
\]
where $\mu$ is the M\"obius function of $L(\CA)$, 
see \cite[Def.\ 2.48, Def.\ 2.52]{orlikterao:arrangements}.

We recall the concept of a generic arrangement from \cite[Def.~5.22]{orlikterao:arrangements}.

\begin{defn}
	\label{def:generic}
	An $\ell$-arrangement $\CA$ with $\rank(\CA)=r$ is called \emph{generic} if 
	every subarrangement $\CB$ of $\CA$ of cardinality $\ell$ is linearly independent, \cite[Def.~5.22]{orlikterao:arrangements}.
\end{defn}

\subsection{Supersolvable Arrangements}
\label{ssect:super}

Let $\CA$ be an arrangement.
Following \cite[\S 2]{orlikterao:arrangements}, we say
that $X \in L(\CA)$ is \emph{modular}
provided $X + Y \in L(\CA)$ for every $Y \in L(\CA)$, 
cf.\ \cite[Def.\ 2.32, Cor.\ 2.26]{orlikterao:arrangements}.
The following notion is due to Stanley \cite{stanley:super}. 

\begin{defn}
	\label{def:super}
	Let $\CA$ be a central (and essential) $\ell$-arrangement.
	We say that $\CA$ is \emph{supersolvable} 
	provided there is a maximal chain
	\[
	V = X_0 < X_1 < \ldots < X_{\ell-1} < X_\ell = \{0\}
	\]
	of modular elements $X_i$ in $L(\CA)$.
\end{defn}

\begin{remark}
	\label{rem:ss}
	(i).
	By \cite[Ex.\ 2.28]{orlikterao:arrangements}, 
	$V$, $\{0\}$ and the members in $\CA$ 
	are always modular in $L(\CA)$.
	It follows  that all $0$- $1$-, and $2$-arrangements are supersolvable.
	
	(ii). Supersolvability is a local property  \cite[Prop.~3.2]{stanley:super}.
\end{remark}

\subsection{Free arrangements}
\label{ssect:free}

Free arrangements play a crucial role in the theory of arrangements;
see \cite[\S 4]{orlikterao:arrangements} for the definition and
basic properties. If $\CA$ is free, then
we can associate with $\CA$ the multiset of its \emph{exponents},
denoted $\exp \CA$. 

\begin{remark}
	\label{rem:free}
	(i).
	Generic arrangements are not free, e.g.,~see \cite[\S 4.4]{roseterao}.
	
(ii). Freeness is a local property  \cite[Thm.~4.37]{orlikterao:arrangements}.
\end{remark}

Terao's \emph{Factorization Theorem}
\cite{terao:freefactors} shows
that the Poincar\'e polynomial
of a free arrangement $\CA$
factors into linear terms
given by the exponents of $\CA$
(cf.\ \cite[Thm.\ 4.137]{orlikterao:arrangements}):

\begin{theorem}
	\label{thm:freefactors}
	Suppose that
	$\CA$ is free with $\exp \CA = \{ b_1, \ldots , b_\ell\}$.
	Then
	\[
	\pi(\CA,t) = \prod_{i=1}^\ell (1 + b_i t).
	\]
\end{theorem}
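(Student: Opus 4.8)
The plan is to deduce the product formula from the purely algebraic Solomon--Terao formula, which bridges the combinatorial side (the Poincar\'e polynomial $\pi(\CA,t)$, defined through the M\"obius function of $L(\CA)$) and the homological side (the exponents, defined through a homogeneous basis of the free module $D(\CA)$ of logarithmic derivations). It is worth stressing at the outset that one cannot argue by induction via addition--deletion: not every free arrangement admits a suitable chain of deletions (already $\CH_5$ is free but not inductively free by Theorem \ref{thm:HA}), so the statement must be obtained by a computation valid for \emph{all} free arrangements. Since the excerpt records the identity $\chi(\CA,t)=t^\ell\pi(\CA,-t^{-1})$, it suffices to establish the equivalent factorization $\chi(\CA,t)=\prod_{i=1}^\ell (t-b_i)$ for the characteristic polynomial, from which $\pi(\CA,t)=\prod_{i=1}^\ell(1+b_it)$ follows by the substitution $t\mapsto -t^{-1}$.

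First I would recall the Solomon--Terao formula (cf.\ \cite[Thm.~4.130]{orlikterao:arrangements}): writing $S=\BBK[x_1,\dots,x_\ell]$, $D^p(\CA)$ for the graded $S$-module of logarithmic $p$-derivations (so $D^1(\CA)=D(\CA)$), and $\operatorname{Hilb}(M,x)$ for the Hilbert series of a graded module $M$, one has, in the sign normalization we fix below,
\[
\chi(\CA,t)=(-1)^\ell \lim_{x\to 1}\ \sum_{p=0}^\ell \operatorname{Hilb}\!\big(D^p(\CA),x\big)\,\big(t(x-1)-1\big)^p .
\]
This identity holds for an arbitrary central arrangement and carries no freeness hypothesis. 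The second ingredient I would invoke is that freeness propagates to all exterior powers: if $D(\CA)$ is free with a homogeneous basis of polynomial degrees $b_1,\dots,b_\ell$, then $D^p(\CA)\cong\bigwedge^p D(\CA)$ is again free, with a homogeneous basis indexed by $p$-subsets $\{i_1<\dots<i_p\}$ and of polynomial degrees $b_{i_1}+\dots+b_{i_p}$ (a standard consequence of freeness, see \cite[\S 4]{orlikterao:arrangements}). Consequently
\[
\operatorname{Hilb}\!\big(D^p(\CA),x\big)=\frac{e_p\!\big(x^{b_1},\dots,x^{b_\ell}\big)}{(1-x)^\ell},
\]
where $e_p$ denotes the $p$-th elementary symmetric polynomial.

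Next I would carry out the generating-function collapse. Summing the displayed Hilbert series against $(t(x-1)-1)^p$ and using $\sum_p e_p(y_1,\dots,y_\ell)z^p=\prod_i(1+y_iz)$ turns the sum into
\[
\sum_{p=0}^\ell \operatorname{Hilb}\!\big(D^p(\CA),x\big)\,\big(t(x-1)-1\big)^p=\frac{\prod_{i=1}^\ell\big(1+x^{b_i}\,(t(x-1)-1)\big)}{(1-x)^\ell}.
\]
The key simplification is that each numerator factor is divisible by $1-x$: expanding gives $1+x^{b_i}(t(x-1)-1)=(1-x^{b_i})-t\,x^{b_i}(1-x)=(1-x)\big[(1+x+\dots+x^{b_i-1})-t\,x^{b_i}\big]$, using $1-x^{b_i}=(1-x)(1+x+\dots+x^{b_i-1})$. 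The $\ell$ factors of $1-x$ cancel the denominator exactly, and as $x\to 1$ each bracket tends to $b_i-t$. Hence the limit equals $\prod_{i=1}^\ell(b_i-t)$, and the prefactor $(-1)^\ell$ yields $\chi(\CA,t)=\prod_{i=1}^\ell(t-b_i)$, as required.

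The main obstacle is not this elementary manipulation but the two structural inputs it rests on: the Solomon--Terao formula itself and the freeness of the exterior powers $D^p(\CA)$ with the stated Hilbert series. The former is the genuine content and would be quoted from \cite{orlikterao:arrangements}; the latter follows from Saito's criterion together with the behaviour of $\bigwedge^p$ under the free module structure. A consistency check on a simple example, e.g.\ the Boolean arrangement (all $b_i=1$, both sides equal $(1+t)^\ell$), pins down the sign normalization in the specialization above. A further point requiring care is the $0/0$ nature of the limit, which is resolved precisely by the factor-by-factor divisibility by $1-x$ exhibited in the previous paragraph.
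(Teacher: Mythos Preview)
The paper does not supply its own proof of Theorem~\ref{thm:freefactors}; it merely records Terao's Factorization Theorem with references to \cite{terao:freefactors} and \cite[Thm.~4.137]{orlikterao:arrangements}. Your argument via the Solomon--Terao formula together with the identification $D^p(\CA)\cong\bigwedge^p D(\CA)$ for free $\CA$ is precisely the standard derivation given in \cite[\S 4.5]{orlikterao:arrangements}, so there is nothing in this paper to compare against beyond observing that your proposal reproduces the textbook proof the authors cite.
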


Terao's celebrated \emph{Addition-Deletion Theorem}
\cite{terao:freeI} plays a
fundamental role in the study of free arrangements,
\cite[Thm.\ 4.51]{orlikterao:arrangements}.

\begin{theorem}
	\label{thm:add-del}
	Suppose that $\CA \ne \Phi_\ell$.
	Let  $(\CA, \CA', \CA'')$ be a triple of arrangements. Then any
	two of the following statements imply the third:
	\begin{itemize}
		\item[(i)] $\CA$ is free with $\exp \CA = \{ b_1, \ldots , b_{\ell -1}, b_\ell\}$;
		\item[(ii)] $\CA'$ is free with $\exp \CA' = \{ b_1, \ldots , b_{\ell -1}, b_\ell-1\}$;
		\item[(iii)] $\CA''$ is free with $\exp \CA'' = \{ b_1, \ldots , b_{\ell -1}\}$.
	\end{itemize}
\end{theorem}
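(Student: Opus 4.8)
The standard approach, which I would follow, is to compare Poincaré polynomials via the fundamental combinatorial identity relating a triple. Recall that for any triple $(\CA, \CA', \CA'')$ with distinguished hyperplane $H \in \CA$, $\CA' = \CA \setminus \{H\}$ and $\CA'' = \CA^H$, one has the deletion-restriction recursion
\[
\pi(\CA, t) = \pi(\CA', t) + t\,\pi(\CA'', t),
\]
which follows from the Whitney-type decomposition of $L(\CA)$ according to whether a flat lies in $H$ or not (see \cite[Thm.~2.56]{orlikterao:arrangements}). This identity is purely combinatorial and holds regardless of freeness; it will be the engine of the argument.

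The heart of the proof is then the implication that any two of the three freeness statements force the third, and here one cannot argue by Poincaré polynomials alone — one needs the module-theoretic input. The key tool is the Euler exact sequence / \emph{Addition–Deletion at the level of derivation modules}: if $\theta_H$ denotes a defining form for $H$, there is a natural map $D(\CA) \to D(\CA'')$ whose image lands in a rank-$(\ell-1)$ submodule, fitting into
\[
0 \longrightarrow D(\CA) \longrightarrow D(\CA') \overset{\cdot \theta_H}{\longrightarrow} D(\CA'')
\]
with appropriate degree shifts, together with a criterion (due to Terao, cf.~\cite[Prop.~4.45]{orlikterao:arrangements}) that detects exactness and surjectivity in terms of $D(\CA'')$ being free of the predicted exponents. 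First I would establish this local exact sequence and the characterization of when the right-hand map is surjective. Granting that, each of the three implications (i)\,$\wedge$\,(ii)\,$\Rightarrow$\,(iii), (i)\,$\wedge$\,(iii)\,$\Rightarrow$\,(ii), (ii)\,$\wedge$\,(iii)\,$\Rightarrow$\,(i) becomes a matter of chasing the exact sequence and matching exponents, using Saito's criterion (a set of $\ell$ derivations with the right degrees whose coefficient determinant is a nonzero scalar multiple of the defining polynomial forms a basis of $D(\CA)$) to promote "spanning with correct degrees" to "free basis".

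The main obstacle is the construction and verification of the exact sequence with the correct degree shifts, i.e.\ controlling how the multiplication-by-$\theta_H$ map interacts with the polynomial grading, and proving that surjectivity onto $D(\CA'')$ is equivalent to the numerical coincidence $\exp\CA'' = \exp\CA' \setminus \{b_\ell - 1\}$ (equivalently $\exp\CA = \exp\CA'' \cup \{b_\ell\}$). Once this local statement is in hand, the three implications follow formally and symmetrically. I would organize the writeup so that the exact sequence and its exactness criterion are proved once as a lemma, and then the theorem is deduced in a short paragraph treating the three cases in parallel.
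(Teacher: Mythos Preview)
The paper does not give its own proof of this statement. Theorem~\ref{thm:add-del} is quoted as Terao's classical Addition--Deletion Theorem, with references to \cite{terao:freeI} and \cite[Thm.~4.51]{orlikterao:arrangements}; it is used as background input and is not reproved here. So there is no in-paper argument to compare your plan against.

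That said, your outline is essentially the standard proof as presented in \cite[\S4.3]{orlikterao:arrangements}, and it would work. One small correction: the map $D(\CA') \to D(\CA'')$ in the key sequence
\[
0 \longrightarrow D(\CA) \longrightarrow D(\CA') \longrightarrow D(\CA'')
\]
is \emph{not} multiplication by the defining form $\theta_H$; it is the restriction (reduction of coefficients modulo $\alpha_H$), sending $\theta \in D(\CA')$ to the induced derivation $\bar\theta$ on the coordinate ring $S/\alpha_H S$ of $H$, which one checks lies in $D(\CA'')$ (cf.~\cite[Prop.~4.44, 4.45]{orlikterao:arrangements}). The inclusion $D(\CA) \hookrightarrow D(\CA')$ is the kernel of this restriction precisely because $\theta \in D(\CA')$ lies in $D(\CA)$ if and only if $\theta(\alpha_H) \in \alpha_H S$. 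With that adjustment, your strategy---establish the exact sequence once, then use Saito's criterion and the Poincar\'e identity $\pi(\CA,t) = \pi(\CA',t) + t\,\pi(\CA'',t)$ to handle the three implications---is exactly the textbook route.
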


Theorem \ref{thm:add-del} motivates the notion of an
\emph{inductively free} arrangement,
\cite[Def.\ 4.53]{orlikterao:arrangements}.

\begin{defn}
	\label{def:indfree}
	The class $\CIF$ of \emph{inductively free} arrangements
	is the smallest class of arrangements subject to
	\begin{itemize}
		\item[(i)] $\Phi_\ell \in \CIF$ for each $\ell \ge 0$;
		\item[(ii)] if there exists a hyperplane $H_0 \in \CA$ such that both
		$\CA'$ and $\CA''$ belong to $\CIF$, and $\exp \CA '' \subseteq \exp \CA'$,
		then $\CA$ also belongs to $\CIF$.
	\end{itemize}
\end{defn}

\begin{remark}
	\label{rem:indfree}
	(i). 
	Inductively free arrangements are free. However, the latter class properly contains the former, cf.~\cite[Ex.~4.59]{orlikterao:arrangements}.
	
	(ii). Inductive freeness is also a local property, thanks to  \cite[Thm~1.1]{hogeroehrleschauenburg:free}.
	
	(iii). Supersolvable arrangements
	are inductively free, 
	\cite[Thm.\ 4.58]{orlikterao:arrangements}.
\end{remark}

\subsection{Nice arrangements}
\label{ssect:factored}

The notion of a \emph{nice} or \emph{factored}
arrangement goes back to Terao \cite{terao:factored}.
It generalizes the concept of a supersolvable arrangement.
We recall the relevant notions and results from \cite{terao:factored}
(cf.\  \cite[\S 2.3]{orlikterao:arrangements}).

\begin{defn}
	\label{def:independent}
	Let $\pi = (\pi_1, \ldots , \pi_s)$ be a partition of $\CA$.
	Then $\pi$ is called \emph{independent}, provided
	for any choice $H_i \in \pi_i$ for $1 \le i \le s$,
	the resulting $s$ hyperplanes are linearly independent, i.e.\
	$r(H_1 \cap \ldots \cap H_s) = s$.
\end{defn}

\begin{defn}
	\label{def:indpart}
	Let $\pi = (\pi_1, \ldots , \pi_s)$ be a partition of $\CA$
	and let $X \in L(\CA)$.
	The \emph{induced partition} $\pi_X$ of $\CA_X$ is given by the non-empty
	blocks of the form $\pi_i \cap \CA_X$.
\end{defn}

\begin{defn}
	\label{def:factored}
	The partition
	$\pi$ of $\CA$ is
	\emph{nice} for $\CA$ or a \emph{factorization} of $\CA$  provided
	\begin{itemize}
		\item[(i)] $\pi$ is independent, and
		\item[(ii)] for each $X \in L(\CA) \setminus \{V\}$, the induced partition $\pi_X$ admits a block
		which is a singleton.
	\end{itemize}
	If $\CA$ admits a factorization, then we also say that $\CA$ is \emph{factored} or \emph{nice}.
\end{defn}

\begin{remark}
	\label{rem:factored}
	The class of nice arrangements is closed under taking localizations;
	cf.~the proof of \cite[Cor.~2.11]{terao:factored}.
\end{remark}

In \cite[Thm.\ 2.8]{terao:factored},
Terao proved that a partition $\pi$ of $\CA$ gives
rise to a tensor factorization of the Orlik-Solomon algebra of $\CA$
if and only if $\pi$ is
nice for
$\CA$, see \cite[Thm.~3.87]{orlikterao:arrangements}.
We record a consequence of this fact for our purposes.

\begin{corollary}
	\label{cor:teraofactored}
	Let  $\pi = (\pi_1, \ldots, \pi_s)$ be a factorization of $\CA$.
	Then the following hold:
	\begin{itemize}
		\item[(i)] $s = r = r(\CA)$ and
		\[
		\pi(\CA,t) = \prod_{i=1}^r (1 + |\pi_i|t);
		\]
		\item[(ii)]
		the multiset $\{|\pi_1|, \ldots, |\pi_r|\}$ only depends on $\CA$;
		\item[(iii)]
		for any $X \in L(\CA)$, we have
		\[
		r(X) = |\{ i \mid \pi_i \cap \CA_X \ne \varnothing \}|.
		\]
	\end{itemize}
\end{corollary}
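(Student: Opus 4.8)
The plan is to obtain all three parts as formal consequences of Terao's tensor factorization theorem recalled above. If $\pi = (\pi_1,\ldots,\pi_s)$ is nice for $\CA$, then the Orlik--Solomon algebra of $\CA$ factors as a graded tensor product of subalgebras with Poincar\'e polynomials $1 + |\pi_i|t$, which combined with $\pi(\CA,t) = \Poin(A(\CA),t)$ yields
\[
\pi(\CA,t) = \prod_{i=1}^{s}\bigl(1 + |\pi_i|\,t\bigr),
\]
see \cite[Thm.~2.8]{terao:factored} and \cite[Thm.~3.87]{orlikterao:arrangements}. For (i) I would then compare degrees: each block $\pi_i$ is non-empty, so the right-hand side has degree exactly $s$, whereas the Poincar\'e polynomial of the central arrangement $\CA$ has degree $r(\CA) = r$; hence $s = r$, and the displayed identity is the asserted factorization of $\pi(\CA,t)$. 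Part (ii) is then immediate: $\pi(\CA,t)$ depends only on $L(\CA)$, hence only on $\CA$, and its multiset of roots equals $\{-1/|\pi_i| : 1 \le i \le r\}$, which recovers the multiset $\{|\pi_1|,\ldots,|\pi_r|\}$.

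For (iii) I would localize. Fix $X \in L(\CA)$. Writing $X$ as an intersection of hyperplanes of $\CA$, each of which lies in $\CA_X$, one gets $T(\CA_X) = X$, so $r(\CA_X) = \codim_V X = r(X)$. By Remark \ref{rem:factored} the localization $\CA_X$ is again nice; more precisely, the proof of \cite[Cor.~2.11]{terao:factored} shows that the \emph{induced} partition $\pi_X$ is a factorization of $\CA_X$, and by construction its blocks are exactly the non-empty sets among $\pi_1 \cap \CA_X, \ldots, \pi_r \cap \CA_X$. Applying part (i) to the pair $(\CA_X, \pi_X)$ identifies the number of those blocks with $r(\CA_X) = r(X)$, which is the assertion $r(X) = |\{\, i \mid \pi_i \cap \CA_X \ne \varnothing \,\}|$.

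I do not expect a real obstacle: the statement is bookkeeping built on the (deep, and here assumed) tensor factorization theorem. The single point deserving care is in (iii): the bare statement of Remark \ref{rem:factored} only asserts that $\CA_X$ is nice for \emph{some} partition, while the argument needs the sharper fact that it is the induced partition $\pi_X$ which works --- this is what Terao's proof actually establishes, so it is safe to use, but it is worth flagging explicitly. A secondary, harmless check is the claim that the Poincar\'e polynomial of a central arrangement has degree equal to its rank, used in (i).
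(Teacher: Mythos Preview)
Your proposal is correct and follows exactly the route the paper indicates: the corollary is stated there without proof, merely ``recorded as a consequence'' of Terao's tensor factorization theorem \cite[Thm.~2.8]{terao:factored}, \cite[Thm.~3.87]{orlikterao:arrangements}, and your argument simply fills in the standard bookkeeping (degree comparison for (i), root multiset for (ii), localization plus (i) for (iii)). Your flagged caveat in (iii) --- that one needs the \emph{induced} partition $\pi_X$ to be nice, not just some partition --- is well observed and indeed what Terao's proof gives.
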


\begin{remark}
	\label{rem:factorediscombinatorial}
	It follows from 
	Corollary \ref{cor:teraofactored} that
	the question whether $\CA$ is factored is a purely combinatorial
	property and only depends on the lattice $L(\CA)$.
\end{remark}

	Moreover, the following is immediate from
		Corollary \ref{cor:teraofactored} and Theorem \ref{thm:freefactors}.

	\begin{lemma}
		\label{lem:FactoredFree}
		Let $(\CA,\pi)$ be a factored arrangement which is also free.
		Then $\exp{\CA} = \{|\pi_1|,\ldots,|\pi_\ell|\}$.
	\end{lemma}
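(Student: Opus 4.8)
The plan is to observe that the statement is an almost immediate consequence of two facts already recorded in the excerpt, so the proof will be a short deduction rather than a new argument. First, suppose $(\CA,\pi)$ is a factored arrangement with factorization $\pi = (\pi_1,\ldots,\pi_s)$, and suppose in addition that $\CA$ is free. By Corollary \ref{cor:teraofactored}(i), the number of blocks equals the rank, $s = r = r(\CA) = \ell$ (after passing to an essential arrangement, or interpreting $\ell$ as $r(\CA)$), and moreover the Poincar\'e polynomial factors as
\[
\pi(\CA,t) = \prod_{i=1}^\ell (1 + |\pi_i|\, t).
\]
Second, since $\CA$ is free, Terao's Factorization Theorem (Theorem \ref{thm:freefactors}) gives
\[
\pi(\CA,t) = \prod_{i=1}^\ell (1 + b_i\, t),
\]
where $\exp\CA = \{b_1,\ldots,b_\ell\}$.

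The key step is then to compare these two factorizations of the same polynomial $\pi(\CA,t) \in \BBZ[t]$. Both are products of $\ell$ linear factors of the form $1 + ct$ with $c$ a non-negative integer; since $\BBZ[t]$ is a unique factorization domain (and each such factor is, up to units, irreducible — or more elementarily, one may just read off the roots $-1/|\pi_i|$ and $-1/b_i$), the two multisets of constants must coincide. Hence $\{|\pi_1|,\ldots,|\pi_\ell|\} = \{b_1,\ldots,b_\ell\} = \exp\CA$ as multisets, which is exactly the assertion.

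I would add one sentence of care about normalization: the statement implicitly assumes $\CA$ is essential (so that $r(\CA) = \ell$ and $\exp\CA$ has $\ell$ entries), matching the convention in Definition \ref{def:super} and Corollary \ref{cor:teraofactored}; if one allows a non-essential $\CA$ the same argument applies after replacing $\ell$ by $r(\CA)$ and noting that both sides acquire the appropriate number of zero exponents / empty-block count consistently. There is essentially no obstacle here — the only thing to be slightly careful about is invoking uniqueness of factorization of a polynomial into linear factors over $\BBZ$ (equivalently over $\BBQ$ or $\BBR$), which is standard. I would present the whole proof in three or four lines: cite Corollary \ref{cor:teraofactored}(i) for one factorization, Theorem \ref{thm:freefactors} for the other, and conclude by matching factors.
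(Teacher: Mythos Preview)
Your proposal is correct and matches the paper's approach exactly: the paper states that the lemma is immediate from Corollary~\ref{cor:teraofactored} and Theorem~\ref{thm:freefactors}, which is precisely the comparison of the two factorizations of $\pi(\CA,t)$ that you carry out. Your added remarks on unique factorization and on the essential/non-essential normalization are reasonable elaborations but not needed for the paper's one-line justification.
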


\subsection{Inductively factored arrangements}
\label{sec:indfactored}

Following Jambu and Paris
\cite{jambuparis:factored} and \cite{hogeroehrle:factored},
we introduce further notation.
Suppose that $\CA$ is non-empty and
let $\pi = (\pi_1, \ldots, \pi_s)$ be a  partition  of $\CA$.
Let $H_0 \in \pi_1$ and let
$(\CA, \CA', \CA'')$ be the triple associated with $H_0$.
We have the \emph{induced partition}
$\pi'$ of $\CA'$
consisting of the non-empty parts $\pi_i' := \pi_i \cap \CA'$.
Further, we have the \emph{restriction map}
$\R = \R_{\pi,H_0} : \CA \setminus \pi_1 \to \CA''$ given by
$H \mapsto H \cap H_0$, depending on $\pi$ and $H_0$.
Let $\pi_i'' := \R(\pi_i)$ for $i = 2, \ldots, s$.
Clearly, imposing that
$\pi'' = (\pi''_2, \ldots, \pi''_s)$ is
again a partition of $\CA''$ entails that
$\R$ is onto.

Here is the analogue for nice arrangements of Terao's
Addition-Deletion Theorem (cf.~Theorem \ref{thm:add-del}) for free arrangements from
\cite{hogeroehrle:factored}.

\begin{theorem}
	\label{thm:add-del-factored}
	Suppose
	$\pi = (\pi_1, \ldots, \pi_s)$ is a  partition  of $\CA  \ne \Phi_\ell$.
	Let 
	$(\CA, \CA', \CA'')$ be the triple associated with $H_0  \in \pi_1$.
	Then any two of the following statements imply the third:
	\begin{itemize}
		\item[(i)] $\pi$ is nice for $\CA$;
		\item[(ii)] $\pi'$ is nice for $\CA'$;
		\item[(iii)] $\R: \CA \setminus \pi_1 \to \CA''$
		is bijective and $\pi''$ is nice for $\CA''$.
	\end{itemize}
\end{theorem}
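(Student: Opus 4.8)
The plan is to argue directly from Definition~\ref{def:factored}: a partition is nice exactly when it is independent and, for every proper flat, the induced partition has a singleton block. The first step is to set up a dictionary between $L(\CA)$, $L(\CA')$, $L(\CA'')$ and the induced partitions $\pi_X$, $\pi'_X$, $\pi''_Y$. Write $\CA'=\CA\setminus\{H_0\}$, $\CA''=\CA^{H_0}$, recall $H_0\in\pi_1$, and assume throughout that $r(\CA')=r(\CA)$ and $|\pi_1|\ge 2$ (the degenerate case $\pi_1=\{H_0\}$ is a short separate check). The dictionary has two halves. If $X\in L(\CA)$ with $X\nsubseteq H_0$, i.e.\ $H_0\notin\CA_X$, then $\CA_X=\CA'_X$ and the induced partitions literally agree, $\pi_X=\pi'_X$; conversely every flat of $\CA'$ not below $H_0$ arises this way. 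If $X\subseteq H_0$, then $X\in L(\CA'')$, every flat of $\CA''$ has this form, the flats in $L(\CA)\setminus L(\CA')$ all have this form, and --- \emph{once $\R$ is known to be bijective} --- the blocks of $\pi''_X$ are exactly the sets $\R(\pi_i\cap\CA_X)$ for $i\ge 2$, each of cardinality $|\pi_i\cap\CA_X|$. The clean consequence I would extract and use repeatedly is: for $X\subseteq H_0$, the partition $\pi_X$ has a singleton block if and only if $\CA_X\cap\pi_1=\{H_0\}$ or $\pi''_X$ has a singleton block.

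For the independence conditions, $\pi'_i=\pi_i\cap\CA'\subseteq\pi_i$ shows niceness of $\pi$ forces independence of $\pi'$, since any choice of representatives for $\pi'$ is one for $\pi$. The reverse direction, and independence of $\pi$, rest on the elementary fact that for hyperplanes $H_2,\dots,H_s$ of $\CA$ none containing $H_0$, the tuple $(H_0,H_2,\dots,H_s)$ is linearly independent in $V$ if and only if $(H_2\cap H_0,\dots,H_s\cap H_0)$ is linearly independent in $H_0$; with $\R$ bijective this matches representative-tuples for $\pi$ that use $H_0$ in block~$1$ against representative-tuples for $\pi''$, while those avoiding $H_0$ in block~$1$ are representative-tuples for $\pi'$. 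Together these let any two of the independence statements among (i)--(iii) deliver the third.

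The real content, and the main obstacle, is the singleton condition --- but before the case analysis one must notice that each implication in which ``$\R$ bijective'' is part of the conclusion forces this from the \emph{combination} of the two hypothesized niceness statements, via a codimension-two pencil argument. Concretely, if $H_1\neq H_2$ in $\CA\setminus\pi_1$ satisfy $H_1\cap H_0=H_2\cap H_0=X$, then $r(X)=2$ and $H_0,H_1,H_2\in\CA_X$: if $H_1,H_2$ lie in different blocks of $\pi$, then $\pi_X$ has three non-empty blocks in a rank-$2$ flat, violating independence of $\pi$; if they lie in one block $\pi_i$ ($i\ge2$), niceness of $\pi$ forces $\CA_X\cap\pi_1=\{H_0\}$, whence the only block of $\pi'_X$ of size $\ge2$ is $\pi_i\cap\CA'_X$ and $\pi'_X$ has no singleton, violating niceness of $\pi'$. (Surjectivity of $\R$ is similarly automatic from niceness of $\pi$: a hyperplane $H\cap H_0$ of $\CA''$ with $H\in\pi_1$ lies in the image because niceness of $\pi$ at the rank-$2$ flat $H\cap H_0$ yields a singleton block $\{H'\}$ with $H'\notin\pi_1$ and $H'\cap H_0=H\cap H_0$; and the standard deletion--restriction identity $\pi(\CA,t)=\pi(\CA',t)+t\,\pi(\CA'',t)$ together with Corollary~\ref{cor:teraofactored}(i) cross-checks the cardinalities, e.g.\ $|\CA''|=|\CA|-|\pi_1|$.)

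With $\R$ bijective in hand, the singleton condition transfers through the dictionary case by case. Flats not below $H_0$ are immediate from $\pi_X=\pi'_X$. For $X\subseteq H_0$: in the implication from (ii) and (iii) to (i), the new flats of $L(\CA)\setminus L(\CA')$ get their singleton from niceness of $\pi''$ at $X\in L(\CA'')$; in the implication from (i) and (iii) to (ii), the only delicate flats are the $Y\subseteq H_0$ with $\CA_Y\cap\pi_1=\{H_0\}$, where the vanished block $\pi'_1\cap\CA'_Y$ must be compensated by a singleton block of $\pi''_Y$, again supplied by niceness of $\pi''$; and in the implication from (i) and (ii) to (iii), niceness of $\pi''$ is read off from that of $\pi$ via the dictionary, after the bijectivity-and-partition checks above (pairwise disjointness of the $\R(\pi_i)$ coming from the same pencil argument). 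Thus in every case the bookkeeping closes, and the single genuinely subtle point is the one flagged in the previous paragraph: bijectivity of $\R$ is not extractable from either niceness hypothesis alone but emerges from the interplay of both, which is precisely why condition~(iii) must bundle ``$\R$ is bijective'' together with ``$\pi''$ is nice''.
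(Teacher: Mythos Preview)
The paper does not give its own proof of Theorem~\ref{thm:add-del-factored}; the result is quoted verbatim from \cite{hogeroehrle:factored} as background, so there is no in-paper argument to compare against. Your outline is essentially the direct combinatorial route one would expect, and most of it (the independence transfers, the rank-$2$ pencil argument forcing injectivity of $\R$, the surjectivity argument) is correct.

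There is, however, a genuine gap in your treatment of the singleton condition in the implication (i)$+$(ii)$\Rightarrow$(iii). You assert that ``niceness of $\pi''$ is read off from that of $\pi$ via the dictionary'', but your own dictionary says only: for $X\subseteq H_0$, $\pi_X$ has a singleton block \emph{iff} $\CA_X\cap\pi_1=\{H_0\}$ \emph{or} $\pi''_X$ has a singleton block. From (i) you obtain the disjunction, not the desired conclusion. In the bad case $\pi_1\cap\CA_X=\{H_0\}$ (so every $\pi_j\cap\CA_X$ with $j\ge2$ has size $\ge2$) you must use (ii): put $Y:=\bigcap_{H\in\CA_X\setminus\{H_0\}}H\in L(\CA')\setminus\{V\}$, check that $\CA'_Y=\CA_X\setminus\{H_0\}$, and observe that the blocks of $\pi'_Y$ are precisely $\pi_j\cap\CA_X$ for $j\ge2$ together with the empty set $(\pi_1\cap\CA_X)\setminus\{H_0\}$. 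Niceness of $\pi'$ at $Y$ then forces some $\pi_j\cap\CA_X$ with $j\ge2$ to be a singleton, contradicting the bad case and giving the singleton in $\pi''_X$. This step genuinely needs both (i) and (ii) and is not a mere ``read-off''. A smaller imprecision occurs in (ii)$+$(iii)$\Rightarrow$(i): you treat only the ``new'' flats in $L(\CA)\setminus L(\CA')$, but flats $X\subseteq H_0$ that also lie in $L(\CA')$ are not covered by $\pi_X=\pi'_X$ (that identity holds only for $X\nsubseteq H_0$); the fix is simply to use (iii) uniformly for every $X\subseteq H_0$.
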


The Addition-Deletion Theorem \ref{thm:add-del-factored}
for nice arrangements motivates
the following stronger notion of factorization,
cf.\ \cite{jambuparis:factored}.

\begin{defn} [{\cite[Def.~3.8]{hogeroehrle:factored}}]
	\label{def:indfactored}
	The class $\CIFAC$ of \emph{inductively factored} arrangements
	is the smallest class of pairs $(\CA, \pi)$ of
	arrangements $\CA$ along with a partition $\pi$
	subject to
	\begin{itemize}
		\item[(i)] $(\Phi_\ell, (\varnothing)) \in \CIFAC$ for each $\ell \ge 0$;
		\item[(ii)] if there exists a partition $\pi$ of $\CA$
		and a hyperplane $H_0 \in \pi_1$ such that
		for the triple $(\CA, \CA', \CA'')$ associated with $H_0$
		the restriction map $\R = \R_{\pi, H_0} : \CA \setminus \pi_1 \to \CA''$
		is bijective and for the induced partitions $\pi'$ of $\CA'$ and
		$\pi''$ of $\CA''$
		both $(\CA', \pi')$ and $(\CA'', \pi'')$ belong to $\CIFAC$,
		then $(\CA, \pi)$ also belongs to $\CIFAC$.
	\end{itemize}
	If $(\CA, \pi)$ is in $\CIFAC$, then we say that
	$\CA$ is \emph{inductively factored with respect to $\pi$}, or else
	that $\pi$ is an \emph{inductive factorization} of $\CA$.
	Usually, we say $\CA$ is \emph{inductively factored} without
	reference to a specific inductive factorization of $\CA$.
\end{defn}

\begin{remark}
	\label{rem:localindfac}
	(i). 	
	If $\CA$ is inductively factored, then  $\CA$ is inductively free,
	by \cite[Prop.~3.14]{hogeroehrle:factored}.
	
	(ii).
	Thanks to \cite[Thm.~1.1]{moellerroehrle:indfac}, 
	inductive factoredness  is preserved under localizations.
	
	(iii). If $\CA$ is supersolvable, then it is inductively factored, see \cite{jambuparis:factored} or \cite[Prop.~3.11]{hogeroehrle:factored}.
\end{remark}

\subsection{Simplicial arrangements}
\label{ssect:simplicial}

A real arrangement $\CA$ is \emph{simplicial} provided each chamber of the complement of $\CA$ is an open simplicial cone in the ambient space.
Simpliciality is a combinatorial property. For, thanks to  \cite[Cor.~2.4]{cuntzgeiss}, a central essential real $\ell$-arrangement 
$\CA$ is simplicial if and only if 
\begin{equation}
	\label{eq:simplicial}
	\ell \cdot \chi(\CA, -1) + 2\sum_{H \in \CA} \chi(\CA^H, -1) = 0.
\end{equation}

\begin{remark}
	\label{rem:localsimplicial}
	Simpliciality is preserved under localizations, see \cite[Lem.~2.17(1)]{Cunzmuecksch:sssimplicial}.
\end{remark}

\subsection{$K(\pi,1)$-arrangements}
\label{ssect:kpionearrangements}
A complex $\ell$-arrangement $\CA$ is called  \emph{aspherical}, or a
\emph{$K(\pi,1)$-arrangement} (or that $\CA$ is $K(\pi,1)$ for short), provided
the complement $M(\CA)$ of the union of the hyperplanes in
$\CA$ in $\BBC^\ell$ is aspherical, i.e.~is a
$K(\pi,1)$-space. That is, the universal covering space of $M(\CA)$
is contractible and the fundamental group
$\pi_1(M(\CA))$ of $M(\CA)$ is isomorphic to the group $\pi$.
This is an important
topological property, for
the cohomology ring $H^*(X, \BBZ)$ of a $K(\pi,1)$-space $X$
coincides
with the group cohomology $H^*(\pi, \BBZ)$ of $\pi$.
The crucial point here is that the intersections of codimension $2$
determine the fundamental group $\pi_1(M(\CA))$ of $M(\CA)$.

\begin{remark}
	\label{rem:kpione}
	(i). 
	By Deligne's seminal work \cite{deligne}, complexified simplicial arrangements are $K(\pi, 1)$.
	Likewise for complex supersolvable arrangements, cf.~\cite{falkrandell:fiber-type} and \cite{terao:modular} (cf.~\cite[Prop.\ 5.12, Thm.~5.113]{orlikterao:arrangements}).
	
	(ii).
	Thanks to an observation by Oka,
	asphericity is preserved under localizations,
	e.g., see \cite[Lem.~1.1]{paris:deligne}.
	
	(iii).
		By work of Hattori, 
	generic arrangements are not $K(\pi, 1)$, \cite[Cor.~5.23]{orlikterao:arrangements}.
\end{remark}

\section{Proof of Theorem \ref{thm:HA}}
\label{s:thm:kpi1}
We begin by identifying the small rank hyperpolygonal arrangements $\CH_n$ with known ones.
Clearly, $\CH_2$ is just the reflection arrangement of the Weyl group of type $B_2$.

It follows from the next lemma that 
$\CH_3$ is linearly isomorphic to the connected subgraph arrangement $\CA_G$, where $G = C_3$ is the cycle graph on three vertices. 
See \cite{cuntzkuehne:subgraphs} for the class of 
\emph{connected subgraph arrangements}. 

\begin{lemma}
	\label{lem:H3}
	The arrangements $\CH_3$ and $\CA_{C_3}$ are linearly isomorphic.
\end{lemma}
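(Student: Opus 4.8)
The plan is to exhibit an explicit linear change of coordinates on $V = \BBR^3$ carrying $\CH_3$ onto $\CA_{C_3}$, and then to check that it matches the defining forms up to scalars. First I would write down both arrangements concretely. The arrangement $\CH_3$ consists of the three coordinate hyperplanes $\ker x_1, \ker x_2, \ker x_3$ together with the hyperplanes $H_I$ for $\varnothing \neq I \subseteq [3]$; writing these out, $H_{\{1\}} = \ker(x_1 - x_2 - x_3)$, $H_{\{2\}} = \ker(-x_1 + x_2 - x_3)$, $H_{\{3\}} = \ker(-x_1 - x_2 + x_3)$, $H_{\{1,2\}} = \ker(x_1 + x_2 - x_3)$, $H_{\{1,3\}} = \ker(x_1 - x_2 + x_3)$, $H_{\{2,3\}} = \ker(-x_1 + x_2 + x_3)$, and $H_{\{1,2,3\}} = \ker(x_1 + x_2 + x_3)$. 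Since $H_I = H_{[3]\setminus I}$ in the sense that the forms are negatives of one another, $\CH_3$ has $3 + 4 = 7$ hyperplanes. On the other side, the connected subgraph arrangement $\CA_{C_3}$ of the triangle $C_3$ in $\BBR^3$ (with coordinates $y_1, y_2, y_3$ indexed by the vertices) has one hyperplane for each connected subgraph with at least one edge: the three ``edge'' hyperplanes $\ker(y_i - y_j)$ and the three ``path'' hyperplanes together with the hyperplane from the full triangle — again $7$ hyperplanes, or after reducing to the essential $3$-dimensional picture one typically realizes $\CA_{C_3}$ in $\BBR^3$ directly.

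Next I would produce the isomorphism. The natural guess is the linear map sending $x_i$ to a suitable signed combination so that the ``balanced'' forms $\sum_{i\in I} x_i - \sum_{j\notin I} x_j$ become differences or sums of the new coordinates. Concretely, I expect the substitution of the form $x_i = \tfrac12(y_j + y_k - y_i)$ for $\{i,j,k\} = \{1,2,3\}$ (or its inverse) to do the job: under such a map the coordinate forms $x_i$ pull back to the ``path-type'' forms of $\CA_{C_3}$, the singleton forms $x_i - \sum_{j\neq i} x_j$ pull back to the coordinate-type or edge-type forms, and the top form $x_1 + x_2 + x_3$ is fixed up to scalar. I would verify this by direct substitution, checking that the $7$ lines of forms of $\CH_3$ map bijectively onto the $7$ lines of forms of $\CA_{C_3}$, which simultaneously confirms the matrix is invertible (a $3\times 3$ determinant check).

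Alternatively, and perhaps more cleanly for the writeup, I would argue at the level of matroids: both $\CH_3$ and $\CA_{C_3}$ are simple rank-$3$ arrangements of $7$ hyperplanes, and I would compute their intersection lattices, i.e.\ list all rank-$2$ flats together with their multiplicities (how many hyperplanes pass through each), and observe the two combinatorial types coincide. Since rank-$3$ arrangements are determined up to the relevant notion by their line-point incidences, and — anticipating Theorem \ref{thm:HAiscomb} — $\CH_3$ is projectively unique, matching the combinatorics forces a linear isomorphism over $\BBR$. I would still record the explicit coordinate change, since it is short and makes the identification transparent.

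The main obstacle I anticipate is purely bookkeeping: getting the signs and the factor of $\tfrac12$ right in the coordinate change, and correctly enumerating the hyperplanes of $\CA_{C_3}$ in whatever normalization \cite{cuntzkuehne:subgraphs} uses (connected subgraph arrangements are often presented after quotienting by the all-ones vector, so one must fix a consistent $3$-dimensional model on both sides before comparing). Once the dictionary between the two lists of $7$ forms is pinned down, the verification is a finite check with no conceptual content; the determinant being nonzero is automatic from the bijection of hyperplane classes.
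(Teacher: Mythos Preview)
Your proposal is correct and takes essentially the same approach as the paper: exhibit an explicit linear change of coordinates and verify by direct substitution that the seven forms match up to scalars. Your substitution $x_i = \tfrac12(y_j + y_k - y_i)$ is precisely the inverse of the paper's map $(x_1,x_2,x_3) \mapsto (x_1+x_2,\,x_1+x_3,\,x_2+x_3)$, so the content is identical; the matroid/projective-uniqueness alternative you sketch is sound but not used in the paper.
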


\begin{proof}
	One checks that  (up to scalar multiples) the map on linear forms 
	$$\varphi: \BBR^3 \rightarrow \BBR^3 \ \ \ \begin{bmatrix} x_1 \\ x_2 \\ x_3 \end{bmatrix} \mapsto \begin{bmatrix} x_1+x_2 \\ x_1+x_3 \\ x_2+x_3 \end{bmatrix}$$
	gives a linear isomorphism between $\CH_3$ and $\CA_{C_3}$.
\end{proof}

We also note that $\CH_3$ is linearly isomorphic to the cone over the Shi arrangement of type  $A_2$,  by means of Lemma \ref{lem:H3} and 
\cite[Prop.~3.1]{cuntzkuehne:subgraphs}.

Our next lemma shows that $\CH_4$ is linearly isomorphic to the reflection arrangement $\CA(D_4)$ of the Weyl group of type $D_4$.

\begin{lemma}
	\label{lem:H4}
	The arrangements $\CH_4$ and $\CA(D_4)$ are linearly isomorphic.
\end{lemma}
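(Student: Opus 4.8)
The plan is to write down an explicit linear isomorphism $\varphi\colon \BBR^4 \to \BBR^4$ (equivalently, a change of coordinates on the dual space $V^*$) that carries the set of $2^4 - 1 + 4 = 19$ linear forms defining $\CH_4$ bijectively onto the set of linear forms defining $\CA(D_4)$, up to nonzero scalar multiples. Recall that $\CA(D_4)$ consists of the $\binom{4}{2}\cdot 2 = 12$ hyperplanes $\ker(x_i \pm x_j)$ for $1 \le i < j \le 4$, so it has $12$ hyperplanes, whereas $\CH_4$ has $4 + 15 = 19$; hence some of the forms $H_I$ must coincide up to scalar after applying $\varphi$. Indeed, in $\CH_n$ one already has $H_I = H_{[n]\setminus I}$ as hyperplanes, since the defining form merely changes sign; for $n = 4$ this identifies the $15$ forms $H_I$ in pairs $\{I, [4]\setminus I\}$ together with the self-paired cases, and a quick count of the $\binom{4}{1} = 4$ singletons, $\binom{4}{2} = 6$ pairs (giving $3$ hyperplanes, as $I$ and its complement are both $2$-sets), $\binom{4}{3} = 4$ triples, and $I = [4]$ shows that the $15$ nonempty proper-or-full subsets yield $1 + 4 + 3 + 4 + \ldots$ — so the right bookkeeping is: $\{H_I\}$ contributes $8$ distinct hyperplanes, and together with $\{\ker x_i\}$ we get the $12$ hyperplanes of $\CA(D_4)$. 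The first step, then, is to get this count exactly right and record which forms are identified.

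Second, I would guess the correct $\varphi$ by analogy with the rank-$3$ case in Lemma \ref{lem:H3}: there the map sent $x_i$ to a sum of two coordinates, turning the "all-subset-sums" forms into the root system forms. For $D_4$ a natural candidate is a map of the shape $x_i \mapsto \tfrac12\bigl(\sum_j x_j\bigr) - x_i$ or a signed permutation composed with such an averaging map; one checks that under $x_i \mapsto y_i$ with $y_i := \tfrac12(x_1+x_2+x_3+x_4) - x_i$ the form $\sum_{i\in I} x_i - \sum_{j\notin I} x_j$ becomes a difference or sum of two $y$-coordinates (for $|I| = 1,3$) or a form like $y_i - y_j$ / $y_i + y_j$ (for $|I| = 2$), while $x_i = \tfrac12\sum y_j - $ wait — one must verify invertibility of this averaging map on $\BBR^4$ (its matrix is $\tfrac12 J - I$, with eigenvalues $1$ and $-1$, so it is invertible) and then simply tabulate the images of all $19$ forms.

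The third and final step is the verification: plug each of the $19$ defining forms of $\CH_4$ into $\varphi$ and confirm that the resulting $19$ forms, after removing scalar duplicates, are exactly the $12$ forms $x_i \pm x_j$ (in the new coordinates) defining $\CA(D_4)$, and conversely that every $D_4$ form is hit. This is a finite check over $\binom{4}{k}$ subsets for $k = 1, 2, 3, 4$ plus the four coordinate hyperplanes. I expect the main obstacle to be purely organizational rather than deep: identifying the precise map $\varphi$ (there is a small family of candidates, differing by signed permutations of $D_4$ and by which orthant one averages toward) and then carefully matching the multiplicities so that the many-to-one collapse on the $\CH_4$ side lines up exactly with $\CA(D_4)$. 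Since both arrangements are known to be free with the same exponents (one can cross-check via Theorem \ref{thm:freefactors} that $\exp \CA(D_4) = \{1,3,3,5\}$ and compute $\pi(\CH_4,t)$), a mismatch in the count would signal an error in the chosen $\varphi$, giving a built-in consistency check.
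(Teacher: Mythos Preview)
Your overall strategy---find an explicit $\varphi\in\GL(\BBR^4)$ and tabulate the images of the defining forms---is exactly what the paper does. The count is also right once you clean it up: $H_I=H_{[4]\setminus I}$ leaves $8$ distinct $H_I$, and together with the four $\ker x_i$ you get $12=|\CA(D_4)|$.

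The gap is your candidate map. The transformation $y_i=\tfrac12(x_1+x_2+x_3+x_4)-x_i$ (matrix $\tfrac12 J-I$) is a self-inverse \emph{automorphism of $\CH_4$}, not an isomorphism onto $\CA(D_4)$. Indeed, substituting $x_i=\tfrac12\sum_j y_j - y_i$ one finds, up to scalars,
\[
x_i \;\longmapsto\; \sum_j y_j - 2y_i,\qquad
2x_i-\sum_j x_j \;\longmapsto\; -2y_i,\qquad
\sum_j x_j \;\longmapsto\; \sum_j y_j,\qquad
x_1+x_2-x_3-x_4 \;\longmapsto\; -(y_1+y_2-y_3-y_4),
\]
so the coordinate hyperplanes and the singleton-$H_I$ hyperplanes swap, while the size-$2$ and full-support $H_I$ are fixed. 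You land back in $\CH_4$, not in $\{\ker(y_i\pm y_j)\}$. Composing with signed permutations does not help: those are automorphisms of $\CA(D_4)$, so they cannot turn a map with the wrong image into one with the right image.

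The paper instead writes down the (non-obvious) map
\[
\varphi:\ (x_1,x_2,x_3,x_4)\;\longmapsto\;(x_1+x_2+x_3,\ x_2,\ x_2+x_3+x_4,\ -x_1-x_2-x_4)
\]
and simply verifies that it carries the twelve forms of $\CH_4$ (up to scalars) onto the twelve forms $x_i\pm x_j$ of $\CA(D_4)$. Your plan would succeed once the correct $\varphi$ is found; the obstacle you anticipated as ``organizational'' is in fact the whole content of the lemma, and your analogy with the rank-$3$ averaging map leads to the wrong guess here.
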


\begin{proof}
	One checks that  (up to scalar multiples) the map on linear forms 
	$$\varphi: \BBR^4 \rightarrow \BBR^4 \ \ \ \begin{bmatrix} x_1 \\ x_2 \\ x_3  \\ x_4  \end{bmatrix} \mapsto \begin{bmatrix} x_1+x_2+x_3 \\ x_2 \\ x_2+x_3+x_4 \\ -x_1-x_2-x_4 \end{bmatrix}$$
	gives a linear isomorphism between $\CH_4$ and $\CA(D_4)$.
\end{proof}

Our next lemma gives that the reverse implications in the statements in Theorem \ref{thm:HA}  hold for 
$n\ge6$. 

\begin{lemma}
	\label{lem:genericloc}
	Let $n \ge 6$. Then there exists a generic rank $3$ localization of $\CH_n$. 	
	As a consequence, for $n \ge 6$, $\CH_n$ is not 
	free (and so is not supersolvable, not inductively free, and not inductively factored), it is also not $K(\pi,1)$ (and so is also not simplicial).  
\end{lemma}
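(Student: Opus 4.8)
The plan is to exhibit, for every $n \ge 6$, an explicit element $X \in L(\CH_n)$ of rank $n-3$ such that the localization $(\CH_n)_X$ has rank $3$ and is generic in the sense of Definition \ref{def:generic}, i.e.\ any three of its hyperplanes are linearly independent. First I would look for $X$ of the form $\bigcap_{i \in S} \ker x_i$ for a suitably chosen subset $S \subseteq [n]$ with $|S| = n-3$; intersecting with the coordinate hyperplanes $\ker x_i$, $i \in S$, simply forces $x_i = 0$ for $i \in S$, so every hyperplane in $(\CH_n)_X$ is cut out by a linear form in the remaining three variables $x_j$, $j \in [n]\setminus S =: \{a,b,c\}$. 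Concretely, a hyperplane $H_I$ lies in $(\CH_n)_X$ exactly when $X \subseteq H_I$, which happens precisely when the defining form $\sum_{i\in I} x_i - \sum_{j \notin I} x_j$, restricted to $x_i = 0$ ($i \in S$), is \emph{not} identically zero — equivalently when $I \cap \{a,b,c\}$ and its complement in $\{a,b,c\}$ do not produce a cancellation forced only among the zeroed variables. I would also keep the three coordinate hyperplanes $\ker x_a, \ker x_b, \ker x_c$, which survive in the localization.

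Next I would enumerate the forms in these three variables that actually occur. Writing $s := \sum_{i \in S}(\pm 1)$ contributions — more precisely, on $X$ the form of $H_I$ becomes $\varepsilon_a x_a + \varepsilon_b x_b + \varepsilon_c x_c$ where $\varepsilon_j = +1$ if $j \in I$ and $-1$ otherwise, and this is joined by the three forms $x_a, x_b, x_c$ themselves. So $(\CH_n)_X$ consists (up to sign, hence as hyperplanes) of the three coordinate hyperplanes together with the four hyperplanes $\ker(x_a + x_b + x_c)$, $\ker(x_a + x_b - x_c)$, $\ker(x_a - x_b + x_c)$, $\ker(-x_a + x_b + x_c)$. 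It remains to check that no three of these seven hyperplanes in $\BBR^3$ are linearly dependent: a dependency among $\{x_a, x_b, x_c\}$ and one of the $\pm$-forms is impossible since each $\pm$-form has all coordinates nonzero; a dependency among two coordinate forms and a $\pm$-form is likewise impossible; and any two distinct $\pm$-forms together with a third hyperplane span — since two distinct sign vectors in $\{\pm1\}^3$ already differ in exactly one or three coordinates, one checks directly that every triple is independent. One must be slightly careful that the chosen $S$ genuinely yields $\operatorname{rank}((\CH_n)_X) = 3$ (the seven forms span $\BBR^3$, which is clear) and that $n \ge 6$ is exactly what is needed so that $|[n]\setminus S| = 3$ with $S \ne \varnothing$; for $n = 6$ take $|S| = 3$.

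Having established that $(\CH_n)_X$ is a generic rank-$3$ arrangement, the consequences follow formally from the cited local-property results. By Remark \ref{rem:free}(i) generic arrangements are not free, so by Remark \ref{rem:free}(ii) (freeness is local) $\CH_n$ is not free; then $\CH_n$ is not inductively free (Remark \ref{rem:indfree}(i)), not supersolvable (Remark \ref{rem:indfree}(iii)), and not inductively factored (Remark \ref{rem:localindfac}(i)). For the topological half, Remark \ref{rem:kpione}(iii) says generic arrangements are not $K(\pi,1)$, and Remark \ref{rem:kpione}(ii) says asphericity is local, so $\CH_n$ is not $K(\pi,1)$; since complexified simplicial arrangements are $K(\pi,1)$ by Remark \ref{rem:kpione}(i), $\CH_n$ is not simplicial either. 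I expect the only real work to be the clean bookkeeping of which $H_I$ survive in the localization and the verification that the resulting seven-hyperplane configuration is generic; the rank-$3$ genericity check is a finite, essentially trivial case analysis, and everything after that is a citation.
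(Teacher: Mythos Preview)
Your argument does not go through: the construction confuses localization with restriction, and the resulting arrangement is not generic in any case.

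First, the containment condition is reversed. By definition, $(\CH_n)_X = \{H \in \CH_n \mid X \subseteq H\}$, so $H_I$ lies in the localization precisely when the defining form \emph{vanishes identically} on $X$, not when it survives nontrivially. With $X = \bigcap_{i\in S}\ker x_i$ and $\{a,b,c\} = [n]\setminus S$, the form of $H_I$ restricts on $X$ to $\varepsilon_a x_a + \varepsilon_b x_b + \varepsilon_c x_c$ with all $\varepsilon_j \in \{\pm1\}$; this is never zero, so no $H_I$ belongs to $(\CH_n)_X$. Likewise $\ker x_a,\ker x_b,\ker x_c$ do \emph{not} contain $X$ and are not in the localization. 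What you actually get is $(\CH_n)_X = \{\ker x_i \mid i \in S\}$, a Boolean arrangement of rank $|S| = n-3$, which is free and carries no information. (Relatedly, the rank of a localization at $X$ equals $r(X) = \codim X$; your $X$ has rank $n-3$, not $3$.)

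Second, even the seven-hyperplane arrangement you describe in the variables $x_a,x_b,x_c$ is precisely $\CH_3$, and it is \emph{not} generic: for instance
\[
(x_a+x_b+x_c)-(x_a+x_b-x_c)=2x_c,
\]
so $\ker x_c$, $\ker(x_a+x_b+x_c)$ and $\ker(x_a+x_b-x_c)$ are dependent. Your ``finite case analysis'' would fail at this step.

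The paper's approach avoids coordinate hyperplanes altogether: it chooses four subsets $I_1=\{1\}$, $I_2=\{1,2,3\}$, $I_3=\{1,4,5\}$, $I_4=\{1,2,3,4,5\}$ and sets $X=\bigcap_{j=1}^4 H_{I_j}$. One checks that $r(X)=3$ and that no further hyperplane of $\CH_n$ contains $X$, so the localization consists of exactly these four $H_{I_j}$, any three of which are independent. That gives a genuine generic rank-$3$ localization, after which your final paragraph of citations applies verbatim.
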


\begin{proof}
	For $n = 6$, let $I_1 := \{1\}, I_2 := \{1, 2,3\}, I_3 := \{1, 4,5\}$, $I_4 := \{1, 2,3, 4,5\}$, and define $X = \cap_{i = 1}^4 H_{I_i} \in L(\CH_6)$. 
	Then, as the localization $(\CH_6)_X$ 
	consists of precisely the four hyperplanes $H_{I_1}, \dots, H_{I_4}$ of $\CH_6$, we infer that $(\CH_6)_X$
	is generic of rank $3$, see \cite[Ex.~4.5.6]{roseterao}. 
	
	Generalizing this example for $n > 6$,  for $X = \cap_{i = 1}^4 H_{I_i} \in L(\CH_n)$, the very same argument shows that
	also the localization $(\CH_n)_X$ in $\CH_n$ is still generic of rank $3$.  

	It follows from Remark \ref{rem:free}(i) that $(\CH_n)_X$ is not free and from Remark \ref{rem:kpione}(iii) that $(\CH_n)_X$ is not $K(\pi,1)$. Consequently, $(\CH_n)_X$ is not supersolvable, not inductively free, not inductively factored and not simplicial. And as all of the latter are local properties, thanks to Remarks \ref{rem:ss}(ii),   \ref{rem:free}(ii), \ref{rem:indfree}(ii), \ref{rem:localindfac}(ii), \ref{rem:localsimplicial}, and \ref{rem:kpione}(ii), the lemma follows.
\end{proof}

Thus it remains to show the forward implications in the statements in Theorem \ref{thm:HA} and the remaining reverse implications for $n \le 5$.

Part (i).
Thanks to Remark \ref{rem:ss}(i), $\CH_2$ is supersolvable. 
It follows from Lemma \ref{lem:H3} and  
\cite[Cor.~8.11]{cuntzkuehne:subgraphs} that $\CH_3$ 
is not supersolvable. 
By \cite[Lem.~3.2]{hogeroehrle:nice}, $\CA(D_4)$ is not factored (thus not inductively factored), thus, thanks to Lemma \ref{lem:H4}, neither is $\CH_4$, so the latter is not supersolvable, by  Remark \ref{rem:localindfac}(iii). Thanks to  \cite{edelmanreiner:orlik}, $\CH_5$ is not inductively free, thus by Remark \ref{rem:indfree}(iii) it is not supersolvable. 
Thus Theorem \ref{thm:HA}(i) now follows from Lemma \ref{lem:genericloc}.

Part (ii).
It follows from  \cite[Thm.~1.7]{GMMR} that $\CA_{C_3}$ is inductively factored, thus so is $\CH_3$, by Lemma \ref{lem:H3}.
We have already observed above that $\CH_4$ is not inductively factored. 
Since $\CH_5$ is not inductively free, it is also not inductively factored, by Remark \ref{rem:localindfac}(i).
Consequently, Theorem \ref{thm:HA}(ii) follows again from Lemma \ref{lem:genericloc}.

Part (iii).
For $n \le 4$, the arrangements $\CH_n$ are inductively free: 
For $n \le 3$, this follows from Theorem \ref{thm:HA}(ii) and Remark \ref{rem:localindfac}(i).
	For $\CH_4$ this follows from Lemma \ref{lem:H4} and \cite[Ex.~2.6]{jambuterao:free}.
Edelman and Reiner \cite{edelmanreiner:orlik} have observed that 
$\CH_5$ is free but not inductively free.
Consequently, Theorem \ref{thm:HA}(iii) follows from Lemma \ref{lem:genericloc}.
	
Part (iv).
By  Theorem \ref{thm:HA}(iii) and \cite{edelmanreiner:orlik},  $\CH_n$ is free for $n \le 5$. 
Whence Theorem \ref{thm:HA}(iv) is once again a consequence of Lemma \ref{lem:genericloc}. 

Part (v).
With the aid of the formula \eqref{eq:simplicial} it is straightforward to 
check that $\CH_n$ is simplicial for $n \le 4$ and that $\CH_5$ is no longer simplicial. So Theorem \ref{thm:HA}(v) follows from Lemma \ref{lem:genericloc}.

Finally, Theorem \ref{thm:HA}(vi) is immediate from Lemma \ref{lem:genericloc}.

In closing this section 
we show that $\CH_5$ is still \emph{recursively free}, a concept originally due to Ziegler \cite{ziegler:phd}; see  
\cite[Def.~4.60]{orlikterao:arrangements}.

\begin{remark}
	\label{rem:h5recfree}
	While $\CH_5$ is not inductively free, one can show that it is still recursively free. We sketch the argument. 
	First one checks that $\CA = \CH_5 \cup \{\ker(x_2 - x_4)\}$ is inductively free  with $\exp(\CA) = \{1,5,5,5,6\}$. Next one checks that $\CB := \CA^{\ker(x_2 - x_4)}$ is itself again recursively free with $\exp(\CB) = \{1,5,5,5\}$. Owing to \cite[Def.~4.60]{orlikterao:arrangements}, $\CH_5$ is recursively free.
	To see in turn that $\CB$ is recursively free, one first checks that  $\CC := \CB \cup \{\ker(x_1 - x_3)\}$ is inductively free with $\exp(\CC) = \{1,5,5,6\}$. Then one checks that 
	$\CD := \CC^{\ker(x_1 - x_3)}$ is free with $\exp \CD = \{1,5,5\}$. Thanks to  \cite[Thm.~1.1]{abeetall:free}, $\CD$ is recursively free. Thus, again by  \cite[Def.~4.60]{orlikterao:arrangements},  $\CB$ is recursively free.
	
	It thus follows from the paragraph above about $\CH_5$ and Theorem \ref{thm:HA}(iv) that $\CH_n$ is free if and only if $\CH_n$ is  recursively free. In general these two notions differ, see \cite{cuntzhoge}.  
\end{remark}

\section{Projective Uniqueness: Proof of Theorem \ref{thm:HAiscomb}}
\label{s:thm:HAiscomb}

\begin{defn}
	\label{def:projunique}
	Let $\CA$, $\CB$ be two arrangements in a finite-dimensional $\BBR$-vector space $V$.
	\begin{itemize}
		\item[(i)] $\CA$ and $\CB$ are \emph{linearly isomorphic} if there is a $\varphi \in \GL(V)$
		such that $\CB =  \{\varphi(H) \mid H \in \CA\}$; denoted by $\CA \cong \CB$.
		\item[(ii)] $\CA$ and $\CB$ are \emph{$L$-equivalent} if $L(\CA)$ and $L(\CB)$ are isomorphic as posets;
		denoted by $\CA \cong_L \CB$.
		\item[(iii)] $\CA$ is \emph{projectively unique} if for any arrangement $\CC$ in $V$ we have:
		$\CC \cong_L \CA$ implies $\CC \cong \CA$.
	\end{itemize}
\end{defn}

We recall some results from \cite{GMMR}.
Let $\CA$ be an $\BBR$-arrangement.
We specify what we mean by a subarrangement of $\CA$ being generated by a subarrangement of $\CA$. 

\begin{defn}
	\label{def:gen}
	Let $\varnothing \ne S \subseteq \CA$. Set $\Gen_0(\CA,S) := S$ and inductively $$\Gen_{i+1}(\CA,S) := \left\{H \in \CA \mid \exists\  J \subseteq L(\Gen_i(\CA,S)) : H = \sum_{X \in J} X \right\}$$ for $i \ge 0$.
	Then we say that
	$$\langle S \rangle_\CA := \bigcup_{i\ge 0} \Gen_i(\CA,S) \subseteq \CA$$
	is the subarrangement of $\CA$
	\emph{generated by $S$}. If $\langle S \rangle_\CA  = \CA$, then we say that $S$ generates $\CA$. 
\end{defn}

\begin{lemma}[{\cite[Lem.~3.5]{GMMR}}]
	\label{lem:gen}
	Let $\CA$ and $\CB$ be two arrangements in $V$.
	Suppose $\varnothing \ne S \subseteq \CA$, $\varnothing \ne T \subseteq \CB$
	such that $\langle S \rangle_\CA = \CA$ and $\langle T \rangle_\CB = \CB$, i.e.~$S$ generates $\CA$
	and $T$ generates $\CB$.
	If $\CA$ and $\CB$ are $L$-equivalent via a poset isomorphism $\psi:L(\CA)\to L(\CB)$ and $\varphi \in \GL(V)$
	such that 
	$\psi(S) = \varphi(S) = T$ and $\psi(H) = \varphi(H)$ for all $H \in S$, 
	then $\varphi$ extends to a linear isomorphism between the whole arrangements, i.e.\
	$\varphi(\CA) = \CB$.
\end{lemma}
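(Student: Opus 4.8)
The plan is to prove the statement by induction on the generation level $i$, carrying along at each stage a claim that is slightly stronger than what is strictly being produced: namely that $\varphi$ agrees with $\psi$ not only on the hyperplanes generated so far but on the entire intersection sublattice they span. Concretely, I would establish for every $i \ge 0$ the inductive assertion that $\varphi(\Gen_i(\CA,S)) = \Gen_i(\CB,T)$ together with $\varphi(X) = \psi(X)$ for all $X \in L(\Gen_i(\CA,S))$. Granting this for all $i$ and taking the union over $i$ yields at once $\varphi(\langle S\rangle_\CA) = \langle T\rangle_\CB$, that is $\varphi(\CA)=\CB$, since by hypothesis $S$ generates $\CA$ and $T$ generates $\CB$.

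\textbf{Base case.} For $i=0$ we have $\Gen_0(\CA,S)=S$ and $\Gen_0(\CB,T)=T$, so $\varphi(\Gen_0(\CA,S)) = \varphi(S) = T$ is given. To promote the assumed agreement $\varphi(H)=\psi(H)$ on the hyperplanes $H \in S$ to agreement on all of $L(S)$, I would use that, in the reverse-inclusion order on $L(\CA)$, the join of lattice elements is their intersection. Since $\varphi \in \GL(V)$ preserves intersections and $\psi$, being a poset isomorphism of $L(\CA)$, preserves joins, both maps send $H_1 \cap \dots \cap H_k$ to $\varphi(H_1)\cap\dots\cap\varphi(H_k) = \psi(H_1)\cap\dots\cap\psi(H_k)$, whence $\varphi=\psi$ on $L(S)$.

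\textbf{Inductive step.} This is the heart of the matter. Given $H \in \Gen_{i+1}(\CA,S)$, write $H = \sum_{X\in J} X$ with $J \subseteq L(\Gen_i(\CA,S))$. By linearity of $\varphi$ and the inductive hypothesis, $\varphi(H) = \sum_{X\in J}\varphi(X) = \sum_{X\in J}\psi(X)$, and this is a genuine hyperplane since $\varphi$ is a linear automorphism. The key observation is that, because $H = \sum_{X\in J} X$ already lies in $L(\CA)$, this subspace sum coincides with the lattice meet $\bigwedge_{X\in J} X$ computed in $L(\CA)$; as $\psi$ preserves meets, $\psi(H) = \bigwedge_{X\in J}\psi(X)$ is the smallest element of $L(\CB)$ containing $\sum_{X\in J}\psi(X) = \varphi(H)$. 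Now $\psi(H)$ is an atom of $L(\CB)$, hence a hyperplane, and it contains the hyperplane $\varphi(H)$; two codimension-one subspaces in a containment relation must coincide, so $\varphi(H) = \psi(H)$. Since the inductive hypothesis also gives that $\psi$ carries $L(\Gen_i(\CA,S))$ onto $L(\Gen_i(\CB,T))$, the family $\{\psi(X)\mid X\in J\}$ lies in $L(\Gen_i(\CB,T))$, certifying $\psi(H) \in \Gen_{i+1}(\CB,T)$. This proves $\varphi(\Gen_{i+1}(\CA,S)) \subseteq \Gen_{i+1}(\CB,T)$; the reverse inclusion follows by the symmetric argument applied to $\varphi\inverse$ and $\psi\inverse$, and agreement of $\varphi$ and $\psi$ on $L(\Gen_{i+1}(\CA,S))$ then follows exactly as in the base case via intersection-preservation.

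\textbf{Main obstacle.} The subtlety I expect to be central is the interaction between the purely order-theoretic map $\psi$ and the genuinely linear operation of summing subspaces: $\psi$ does \emph{not} commute with subspace sums in general, and the only bridge available is that a sum which happens to land inside $L(\CA)$ equals the corresponding lattice meet, an operation $\psi$ does respect. The final step from ``$\psi(H)$ contains $\varphi(H)$'' to ``$\psi(H)=\varphi(H)$'', forced by the codimension-one containment, is the small but indispensable geometric input that lets the induction close.
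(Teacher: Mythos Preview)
The paper does not supply its own proof of this lemma; it is quoted from \cite[Lem.~3.5]{GMMR} and stated without argument. Your induction on the generation level, strengthened by the auxiliary claim $\varphi|_{L(\Gen_i(\CA,S))} = \psi|_{L(\Gen_i(\CA,S))}$, is correct and is the natural approach: the key point---that when $\sum_{X\in J}X$ happens to lie in $L(\CA)$ it equals the lattice meet $\bigwedge_{X\in J}X$, so that the order-isomorphism $\psi$ can be invoked, followed by the codimension-one squeeze $\varphi(H)\subseteq\psi(H)$---is identified and handled cleanly.
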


The following is a consequence of Lemma \ref{lem:gen}.

\begin{prop}[{\cite[Prop.~3.6]{GMMR}}]
	\label{prop:gen}
	Let $\CA$ be an essential and irreducible arrangement in $V \cong \BBR^\ell$.
	Suppose there is a subset $S$ of $\CA$ such that $\langle S \rangle_\CA = \CA$ and $|S| = \ell+1$.
	Then $\CA$ is projectively unique.
\end{prop}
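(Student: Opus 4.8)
The plan is to unwind Definition~\ref{def:projunique} and reduce everything to an application of Lemma~\ref{lem:gen}. So let $\CC$ be an arbitrary arrangement in $V$ with $\CC \cong_L \CA$, and fix a poset isomorphism $\psi\colon L(\CA)\to L(\CC)$. Since $\psi$ preserves the rank function, it restricts to a bijection between the atoms of the two lattices, i.e.\ between the hyperplanes of $\CA$ and of $\CC$; in particular $\psi(\CA)=\CC$ and $T:=\psi(S)$ is a subset of $\CC$ with $|T|=\ell+1$. To invoke Lemma~\ref{lem:gen} with $\CB=\CC$ I must produce a $\varphi\in\GL(V)$ agreeing with $\psi$ on $S$ and verify that $T$ generates $\CC$.

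The construction of $\varphi$ is the key point, and it is exactly where the hypothesis $|S|=\ell+1$ is used. Write $S=\{H_1,\dots,H_{\ell+1}\}$ with defining forms $\alpha_1,\dots,\alpha_{\ell+1}\in V^*$, and let $\beta_1,\dots,\beta_{\ell+1}$ be defining forms of $\psi(H_1),\dots,\psi(H_{\ell+1})$. Because $\CA$ is essential and $\langle S\rangle_\CA=\CA$, every defining form of $\CA$ lies in the span of the $\alpha_i$ (each hyperplane added in Definition~\ref{def:gen} is a sum of flats, so its normal lies in the span of the normals already present); hence $\rank S=\ell$ and the $\ell+1$ forms $\alpha_i$ span $V^*$, so they admit a one-dimensional space of linear relations $\sum_i c_i\alpha_i=0$. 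The support $\{i : c_i\ne 0\}$ records which $\ell$-subsets of $S$ are dependent, a datum read off from the ranks $r(H_{i_1}\cap\cdots\cap H_{i_\ell})$ and therefore preserved by $\psi$; thus $\beta_1,\dots,\beta_{\ell+1}$ satisfy a relation $\sum_i d_i\beta_i=0$ with the same support. This is precisely the data of two labelled projective frames with matching degeneracies, and the fundamental theorem of projective geometry supplies the needed map; concretely, pick $\ell$ independent forms among the $\alpha_i$, send them to the corresponding $\beta_i$ rescaled so that the two relations correspond coefficientwise, and check that the remaining form maps into $\langle\beta_{\ell+1}\rangle$. Dualizing, this yields $\varphi\in\GL(V)$ with $\varphi(H_i)=\psi(H_i)$ for all $i$, so $\varphi(S)=\psi(S)=T$ and $\varphi$ and $\psi$ agree on $S$.

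It remains to check $\langle T\rangle_\CC=\CC$, and I expect this to be the main obstacle, since the operator of Definition~\ref{def:gen} involves the vector-space sums $\sum_{X\in J}X$ and is a priori \emph{not} a combinatorial invariant: a lattice isomorphism only guarantees that $\psi$ carries the meet $\bigwedge_{X\in J}X$, i.e.\ the flat closure of the sum, to the corresponding meet, not that the sum itself stays a hyperplane. To circumvent this I would transport along the map $\varphi$ already constructed. Put $\CC':=\varphi^{-1}(\CC)$, so $\CC'\cong_L\CA$ via $\psi':=\varphi^{-1}\circ\psi$ and, since $\varphi^{-1}(T)=S$, the arrangement $\CC'$ contains $S$ while $\psi'$ fixes $S$ pointwise. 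One then shows by induction on the generation degree $i$ that $\Gen_i(\CC',S)=\Gen_i(\CA,S)$ as sets of hyperplanes and that $\psi'$ fixes every flat of $L(\Gen_i(\CA,S))$: given $H=\sum_{X\in J}X\in\Gen_{i+1}(\CA,S)$ with $X\in L(\Gen_i(\CA,S))$, the inductive hypothesis gives $\psi'(X)=X$, whence $\psi'(H)$ equals the flat closure in $\CC'$ of $\sum_{X\in J}X=H$; but $\psi'(H)$ is a hyperplane, and the only codimension-one flat closure of the codimension-one subspace $H$ is $H$ itself, so $\psi'(H)=H\in\CC'$ and $H\in\Gen_{i+1}(\CC',S)$. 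This yields $\langle S\rangle_{\CC'}=\CC'$, equivalently $\langle T\rangle_\CC=\CC$. With all hypotheses in place, Lemma~\ref{lem:gen} gives $\varphi(\CA)=\CC$, that is $\CC\cong\CA$, proving that $\CA$ is projectively unique.
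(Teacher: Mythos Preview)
The paper does not give its own proof of this proposition; it is quoted from \cite{GMMR} with only the remark that it is ``a consequence of Lemma~\ref{lem:gen}.'' Your argument is correct and supplies exactly the two ingredients needed to invoke that lemma: the construction of $\varphi\in\GL(V)$ agreeing with $\psi$ on $S$ via a projective-frame argument (this is where $|S|=\ell+1$ and essentiality enter), and the verification that $T=\psi(S)$ generates $\CC$.

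One observation: your inductive argument for the second ingredient---transporting along $\varphi$ to $\CC'=\varphi^{-1}(\CC)$ and showing $\Gen_i(\CC',S)=\Gen_i(\CA,S)$ for all $i$---already yields $\langle S\rangle_{\CC'}=\CA$, hence $\CA=\CC'$ (equal cardinality) and thus $\varphi(\CA)=\CC$ directly. So the concluding appeal to Lemma~\ref{lem:gen} is superfluous; in effect you have reproved that lemma while checking its hypotheses. This is not a flaw, just a redundancy in presentation. Incidentally, your argument never invokes the irreducibility hypothesis; it is used in \cite{GMMR} to guarantee that the unique linear relation among the $\alpha_i$ has full support, which streamlines the projective-frame step, but as you implicitly observe the construction of $\varphi$ goes through regardless once one knows the two relations have the \emph{same} support.
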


Finally, the following result gives
Theorem \ref{thm:HAiscomb}.

\begin{prop}
	\label{prop:genCSG}
	Let $ n \ge 1$ and  
	let 
	$$S := \{\ker x_1, \ldots, \ker x_{n-1}, \ker(x_1 + \ldots + x_n), \ker(x_1 + \ldots + x_{n-1} - x_n)\} \subseteq \CH_n.$$ Then
	$\langle S \rangle_{\CH_n} = \CH_n$.
	In particular, $\CH_n$ is projectively unique over $\BBR$.
\end{prop}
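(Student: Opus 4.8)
The plan is to verify the hypotheses of Proposition \ref{prop:gen} for the set $S$: namely, that $\CH_n$ is essential and irreducible, that $|S| = n+1$, and that $\langle S \rangle_{\CH_n} = \CH_n$. The first two points are immediate: $\CH_n$ contains $\ker x_1, \ldots, \ker x_n$, hence is essential, and irreducibility follows since the hyperplanes $H_I$ glue the coordinates together (no nontrivial coordinate subspace decomposition is compatible with $H_{[n]} = \ker(x_1 + \cdots + x_n)$); and $S$ visibly has $n+1$ elements. So the whole content is the generation statement, which then yields projective uniqueness over $\BBR$ by Proposition \ref{prop:gen}, proving Theorem \ref{thm:HAiscomb}.

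To prove $\langle S \rangle_{\CH_n} = \CH_n$, I would argue that every hyperplane of $\CH_n$ — that is, every $\ker x_i$ and every $H_I$ for $\varnothing \neq I \subseteq [n]$ — can be obtained from $S$ by iterated sums of rank-two flats, as in Definition \ref{def:gen}. First I would recover $\ker x_n$: note $\ker x_n$ is the sum (i.e.\ the span of the union, equivalently the smallest element of $L$ above a codimension-two flat) of $\ker(x_1 + \cdots + x_n)$ and $\ker(x_1 + \cdots + x_{n-1} - x_n)$, since these two hyperplanes intersect in the codimension-two flat $\{x_1 + \cdots + x_{n-1} = 0,\ x_n = 0\}$ and... more carefully, one wants $H, H' \in \Gen_i$ with $H + H' \in \CH_n$; here the relevant combination is that $\ker x_n$ arises from the flat $X = \ker(x_1+\cdots+x_n) \cap \ker(x_1+\cdots+x_{n-1}-x_n)$ together with a coordinate hyperplane, or directly as a sum. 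So after the first generation step we have all coordinate hyperplanes $\ker x_1, \ldots, \ker x_n$ available. Then $H_{[n]} \in S$ and $H_{[n]\setminus\{n\}} = \ker(x_1+\cdots+x_{n-1}-x_n) \in S$; and in general I would show $H_I$ for arbitrary $I$ is built up by induction on $|I|$ or on the symmetric difference from $[n]$, using that replacing one index in or out of $I$ changes $\sum_{i\in I} x_i - \sum_{j \notin I} x_j$ by $\pm 2 x_k$, so that $H_I$ and $H_{I'}$ with $|I \triangle I'| = 1$ together with the coordinate hyperplane $\ker x_k$ produce $H_{I''}$ for the appropriate third set — one checks the three forms $\sum_{I}\!-\!\sum_{\bar I}$, $\sum_{I'}\!-\!\sum_{\bar I'}$, $x_k$ are linearly dependent and span a rank-two space contained in a flat of $\CH_n$, so the relevant sum of flats lands in $\CH_n$. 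Iterating, one reaches every $H_I$ starting from $H_{[n]}$ and $H_{[n]\setminus\{n\}}$.

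The main obstacle I expect is bookkeeping the $\Gen_i$ formalism correctly: Definition \ref{def:gen} requires $H = \sum_{X \in J} X$ for a set $J$ of flats of the previously generated subarrangement, so at each step I must exhibit an actual flat (an intersection of already-generated hyperplanes) whose defining equations force the new hyperplane, rather than just asserting a linear dependence among three forms. Concretely, to get $H_{I''}$ I would take $X = H_I \cap \ker x_k$ (a rank-two flat, provided $H_I \neq \ker x_k$, as $I'' = I \triangle \{k\}$) and observe that $H_{I''} \supseteq X$ forces $H_{I''} = X + (\text{something})$, or better, that $H_{I''}$ is itself the join of two rank-one flats lying in already-generated hyperplanes — here I would lean on the fact that the three hyperplanes $H_I, H_{I''}, \ker x_k$ are exactly the hyperplanes of $\CH_n$ through their common codimension-two intersection (a near-pencil), so the flat $H_I \cap H_{I''}$ is already a flat of $\langle\cdot\rangle_i$ once $H_I$ is, and $H_{I''}$ is recovered as a sum of subspaces of this flat with $\ker x_k$. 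I would organize the induction so that the ``base'' after step one is $\{\ker x_1,\ldots,\ker x_n, H_{[n]}, H_{[n-1]}\}$ and then a clean induction on $n - |I|$ (flipping indices out of $[n]$ one at a time, using the near-pencil through $H_I$, $H_{I \setminus \{k\}}$, $\ker x_k$) reaches all $H_I$; a symmetric argument or the involution $I \mapsto [n]\setminus I$ (which preserves $\CH_n$) handles small $I$. Everything else is routine.
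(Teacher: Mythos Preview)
Your plan has the right shape, but the $\Gen_i$ bookkeeping---which you correctly flag as the crux---does not go through as written. Definition~\ref{def:gen} requires $H=\sum_{X\in J}X$ with every $X\in J$ a flat of the already-generated subarrangement \emph{contained in $H$}. A single codimension-two flat $X=H_I\cap\ker x_k\subseteq H_{I''}$ is therefore not enough; you need a second flat $Y\subseteq H_{I''}$ with $X+Y=H_{I''}$, and neither $\ker x_k$ nor ``subspaces of this flat with $\ker x_k$'' provides one, since $\ker x_k\not\subseteq H_{I''}$. (Your near-pencil observation is correct but does not manufacture the missing $Y$.) The same difficulty already bites at your first step: writing $\alpha=x_1+\cdots+x_n$ and $\beta=x_1+\cdots+x_{n-1}-x_n$, every nonzero flat of $L(S)$ contained in $\ker x_n$ must lie in $H_\alpha\cap H_\beta$, because $x_n=\tfrac12(\alpha-\beta)$ while $x_1,\dots,x_{n-1}$ do not involve $x_n$ at all; hence sums of such flats stay inside $H_\alpha\cap H_\beta$ and $\ker x_n\notin\Gen_1(\CH_n,S)$, contrary to what you claim.

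The paper sidesteps the induction entirely by producing, for each $H_I$ (taking $n\notin I$ via $H_I=H_{[n]\setminus I}$), \emph{two} flats of $L(S)$ of complementary dimension inside $H_I$. From the identities
\[
\gamma_I \;=\; -\alpha+2\sum_{i\in I}x_i \;=\; \beta-2\!\!\sum_{j\in[n-1]\setminus I}\!\!x_j
\]
one takes $X=H_\alpha\cap\bigcap_{i\in I}\ker x_i$ and $Y=H_\beta\cap\bigcap_{j\in[n-1]\setminus I}\ker x_j$; then $X,Y\subseteq H_I$, $\dim X=n-|I|-1$, $\dim Y=|I|$, $X\cap Y=0$, and so $H_I=X+Y\in\Gen_1(\CH_n,S)$ in one stroke. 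The idea your argument is missing is exactly this: one must exploit \emph{both} distinguished hyperplanes $H_\alpha$ and $H_\beta$ in $S$ simultaneously (not just a single neighbour and one coordinate hyperplane) to obtain two flats whose sum fills out the target hyperplane.
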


\begin{proof}
	Let $\varnothing \ne I \subseteq [n]$. Without loss, we may assume that $n \notin I$. For $i \in [n-1]$, set $H_i := \ker x_i$, and 
	for $\alpha := x_1 + \ldots + x_n$ and $\beta := x_1 + \ldots + x_{n-1} -  x_n$, set $H_\alpha := \ker \alpha$ and $H_\beta := \ker \beta$. 
	Then 
	$H_I = \ker \left(\sum_{i \in I}x_i - \sum_{j \in [n]\setminus I}x_j\right) = \ker \gamma$, where 
	\begin{equation}
		\label{eq:gamma}
		\gamma = - \alpha + 2 \sum_{i \in I}x_i = \beta - 2 \sum_{j \in [n-1]\setminus I}x_j.
	\end{equation}
	It follows from \eqref{eq:gamma} that 
		\begin{align*}
		X & := H_\alpha \cap \bigcap_{i \in I} H_i \subseteq H_I \text{ and } \\
		Y & :=  H_\beta \cap \bigcap_{j \in [n-1]\setminus I} H_j \subseteq H_I.
	\end{align*}
	Since $\dim X = n - |I| - 1$, $\dim Y = |I|$, and  $\dim (X \cap Y) = 0$, we have $H_I = X + Y$.
	Since both $X$ and $Y$ belong to the lattice of intersections of the subarrangement $S$ of $\CH_n$, we infer $H_I = X + Y \in \langle S \rangle_{\CH_n}$. 
	Consequently, the result follows thanks to Proposition \ref{prop:gen}.
	\end{proof}

\section{Combinatorial Formality: Proof of Theorem \ref{thm:HAformal}}
\label{s:thm:HAformal}

A property for arrangements is said to be \emph{combinatorial} if it only depends on the intersection lattice of the underlying arrangement.
Yuzvinsky \cite[Ex.~2.2]{yuzvinsky:obstruction} demonstrated that formality is not combinatorial, answering a question raised by Falk and Randell \cite{falkrandell:homotopy} in the negative.
Yuzvinsky's insight motivates the following notion from \cite{moellermueckschroehre:formal}.

\begin{defn}
	\label{def:combformal}
	Suppose $\CA$ is a formal arrangement. We say $\CA$ is \emph{combinatorially formal} if every arrangement with 
	an intersection lattice isomorphic to the one of
	$\CA$ is also formal.
\end{defn}

The following definitions, which are originally due to Falk  for matroids \cite{falk:line-closure}, were adapted for arrangements in \cite[\S 2.4]{moellermueckschroehre:formal}.
Let $\CB \subset \CA$ be a subset of hyperplanes.
We say $\CB$ is \emph{closed} if $\CB =\CA_Y$ for $Y=\bigcap\limits_{H\in \CB} H$.
We call $\CB $ \emph{line-closed}
if for every pair $H,H'\in \CB $ of hyperplanes, we have $\CA_{H\cap H'}\subset \CB $.
The \emph{line-closure} $\lc(\CB)$ of $\CB $ is defined
as the intersection of all line-closed subsets of $\CA$ containing $\CB$.
The arrangement $\CA$ is called \emph{line-closed}
if every line-closed subset of $\CA$ is closed.
With these notions, we have the following criterion for combinatorial formality, see \cite[Cor.~3.8]{falk:line-closure}, \cite[Prop.~3.2]{moellermueckschroehre:formal}:

\begin{prop}
	\label{prop:lcbasis}
	Let $\CA$ be an arrangement of rank $r$. Suppose $\CB \subseteq \CA$ consists of $r$ hyperplanes such that $r(\CB)=r$ and $\lc(\CB)=\CA$. Then $\CA$ is combinatorially formal.
\end{prop}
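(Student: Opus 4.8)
The plan is to split the argument into a combinatorial reduction followed by an analytic core. First I would record that \emph{combinatorial formality} of $\CA$ follows at once from the implication: \emph{any} arrangement of rank $r$ possessing an independent subset $\CB$ of $r$ hyperplanes with $\lc(\CB)=\CA$ is formal. Indeed, let $\CA'$ be any arrangement admitting a lattice isomorphism $\psi\colon L(\CA)\to L(\CA')$. Since the rank function, the localizations $\CA_{H\cap H'}$ (which are read off as the atoms below the join $\psi(H)\cap\psi(H')$), and hence the line-closure operator are all determined by the intersection lattice, the image $\CB':=\psi(\CB)$ is again an independent set of $r=r(\CA')$ hyperplanes with $\lc(\CB')=\CA'$. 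Thus $\CA'$ inherits the hypotheses, the implication yields formality of $\CA'$, and since $\CA'$ was arbitrary (and $\CA$ itself qualifies) this proves combinatorial formality.

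For the core implication I would pass to the defining forms. Let $W\subseteq V^*$ be the span of the forms of $\CA$; it has dimension $r$, and writing $\CB=\{H_1,\dots,H_r\}$ with forms $\alpha_1,\dots,\alpha_r$, the condition $r(\CB)=r$ says exactly that $\alpha_1,\dots,\alpha_r$ is a basis of $W$. Expanding $\alpha_H=\sum_i c_{H,i}\alpha_i$ for each $H\in\CA$, the relation space $F(\CA)=\{(a_H)\mid \sum_H a_H\alpha_H=0\}$ is freely generated by the relations $R_H:=e_H-\sum_i c_{H,i}e_{H_i}$ for $H\in\CA\setminus\CB$. Hence $\CA$ is formal precisely when every $R_H$ lies in the subspace $F_2(\CA)$ spanned by the dependencies supported on localizations $\CA_X$ at rank-two flats $X$.

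To reach every $R_H$ I would realize the line-closure by a filtration $\CB=\CB_0\subseteq\CB_1\subseteq\cdots\subseteq\CB_N=\CA$ built one hyperplane at a time: whenever some $H\in\CA\setminus\CB_k$ satisfies $H\in\CA_{H'\cap H''}$ for $H',H''\in\CB_k$ with $r(H'\cap H'')=2$, set $\CB_{k+1}:=\CB_k\cup\{H\}$. The hypothesis $\lc(\CB)=\CA$ guarantees this exhausts $\CA$. At such a step the three forms $\alpha_H,\alpha_{H'},\alpha_{H''}$ all vanish on the codimension-two flat $H'\cap H''$, hence lie in its two-dimensional annihilator; as $\alpha_{H'},\alpha_{H''}$ are independent we obtain $\alpha_H=\lambda\alpha_{H'}+\mu\alpha_{H''}$, giving a relation $\rho=e_H-\lambda e_{H'}-\mu e_{H''}$ supported on $\CA_{H'\cap H''}$, so $\rho\in F_2(\CA)$. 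Inductively $R_{H'},R_{H''}\in F_2(\CA)$ (with the convention $R_K=0$ for $K\in\CB$), and comparing coefficients against the basis $\{\alpha_i\}$, so that $c_{H,i}=\lambda c_{H',i}+\mu c_{H'',i}$, yields the identity $R_H=\rho+\lambda R_{H'}+\mu R_{H''}$, placing $R_H$ in $F_2(\CA)$ as well.

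I expect the main obstacle to be the bookkeeping in this last inductive step: verifying that the \emph{local} rank-two relation $\rho$ combines with the previously constructed relations $R_{H'},R_{H''}$ to reproduce exactly the \emph{global} relation $R_H$ expressing $\alpha_H$ in the fixed basis. This rests on the uniqueness of coordinate expansions in $\{\alpha_i\}$ and on confirming that the single-hyperplane closure steps genuinely saturate to the full line-closure $\lc(\CB)$; once both are secured, the generators $R_H$ all lie in $F_2(\CA)$, so $F(\CA)=F_2(\CA)$ and formality follows.
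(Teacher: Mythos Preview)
Your argument is correct. Note, however, that the paper does not supply its own proof of this proposition: it is quoted from the literature as \cite[Cor.~3.8]{falk:line-closure} and \cite[Prop.~3.2]{moellermueckschroehre:formal}, so there is no in-paper argument to compare against. Your approach---first observing that the existence of an lc-basis is a lattice invariant (so that combinatorial formality reduces to ordinary formality), then filtering $\CA$ one hyperplane at a time along line-closure steps and using the identity $R_H=\rho+\lambda R_{H'}+\mu R_{H''}$ to push each basic relation into $F_2(\CA)$---is essentially the standard proof found in those references (Falk's version is phrased matroidally, but the translation is immediate). The step you flag as the main obstacle is indeed the crux, and your verification of it via uniqueness of coordinates in the basis $\alpha_1,\dots,\alpha_r$ is exactly right.
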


A subset $\CB \subseteq \CA$ as in Proposition \ref{prop:lcbasis} is called an \emph{lc-basis} of $\CA$.

\begin{proof}[{Proof of Theorem \ref{thm:HAformal}}]
Let $\CB = \{\ker x_1, \ldots, \ker x_{n-1}, \ker(x_1 + \ldots + x_n)\} \subseteq \CH_n$.
Then it is easy to see that successively all $H_I$ for $\varnothing \ne I \subseteq [n]$ belong to the line-closure $\lc(\CB)$ of $\CB$, as follows. 
For $i \in [n]$, set $H_i := \ker x_i$, and 
for $\alpha := x_1 + \ldots + x_n$ set $H_\alpha := \ker \alpha$. 
Then for $I = [n]\setminus \{i\}$, we have
$H_I = \ker \left(\alpha -2 x_i\right) \supset H_\alpha \cap H_i$. Thus for each $I$ of cardinality $n-1$, $H_I$ belongs to $\lc(\CB)$. Now iterate the argument above for $I$ of successively smaller cardinality.
Consequently, $\lc(\CB) =  \CH_n$. Since
	$\rk(\CH_n) = n = |\CB| = \rk(\CB)$, it follows from Proposition \ref{prop:lcbasis}	that $\CH_n$ is combinatorially formal.
\end{proof}

There is a stronger notion of formality for an arrangement $\CA$, that of \emph{$k$-formality} for $1 \le k \le \rk(\CA)$ due to Brandt and Terao \cite{brandtterao}.
In view of Theorem \ref{thm:HAformal} and in view of the fact that all free arrangements are not just formal but are $k$-formal for all $k$, by \cite[Thm.~4.15]{brandtterao}, one might ask for this stronger notion of $k$-formality among  hyperpolygonal arrangements.
Computational evidence for further non-free  hyperpolygonal arrangements suggests the following.

\begin{conjecture}
	For any $n \in \BBN$, $\CH_n$ is $k$-formal for any $k$.
\end{conjecture}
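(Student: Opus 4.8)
The plan is to split off the easy range and then attack the rest through the Brandt--Terao relation complex. For $n \le 5$ the statement is immediate: by Theorem~\ref{thm:HA}(iv), $\CH_n$ is free, and free arrangements are $k$-formal for all $k$ by \cite[Thm.~4.15]{brandtterao}. So the real content is the range $n \ge 6$, where $\CH_n$ is not free. Here one must work directly with the combinatorics of $L(\CH_n)$ and the relation spaces of \cite{brandtterao}: writing $\CH_n = \{H_1,\dots,H_m\}$ with $H_j = \ker\alpha_j$ and $V = \BBR^n$, one has the relation space $R = R(\CH_n) = \ker\bigl(\BBR^m \to V^*,\ e_j \mapsto \alpha_j\bigr)$ and, for each $X \in L(\CH_n)$, the subspace $R_X = R\bigl((\CH_n)_X\bigr) \subseteq R$ of relations supported on the hyperplanes through $X$. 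These assemble into the Brandt--Terao relation complex $\cdots \to \bigoplus_{r(X)=3} R_X \to \bigoplus_{r(X)=2} R_X \to R \to 0$, whose exactness up to homological degree $k$ characterizes $k$-formality; degree-$2$ exactness is ordinary formality, already established in Theorem~\ref{thm:HAformal}. The task is therefore to prove exactness in every degree.

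The approach I would take exploits the large symmetry group of $\CH_n$ to turn this into a finite, orbit-indexed computation. The hyperoctahedral group $B_n = S_n \ltimes (\BBZ/2)^n$ acts on $V$ and preserves $\CH_n$: the symmetric group permutes the coordinates $x_i$, while the sign change $\varepsilon_J$ (sending $x_i \mapsto -x_i$ for $i \in J$) fixes every $\ker x_i$ and sends $H_I$ to $H_{I \triangle J}$. Hence the entire relation complex is a complex of $\BBR B_n$-modules, each $R_X$ is a module over the $B_n$-stabilizer of $X$, and exactness can be tested one isotypic component at a time. Concretely I would (a) classify the flats of $L(\CH_n)$ of each rank up to the $B_n$-action and describe their localizations --- the proof of Proposition~\ref{prop:genCSG} already exhibits the typical shape of low-rank flats as intersections $H_\alpha \cap \bigcap_{i \in I} H_i$ and $H_\beta \cap \bigcap_{j \in [n-1]\setminus I} H_j$ --- then (b) decompose each $R_X$ into $B_n$-representations, and (c) reduce the required vanishing to the acyclicity of a much smaller complex indexed by orbit representatives, to be checked directly, ideally by induction on $n$. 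A more elementary complementary route is to iterate the line-closure argument of the proof of Theorem~\ref{thm:HAformal} one rank at a time: the basic syzygies coming from identities such as $H_{[n]\setminus\{i\}} = \ker(\alpha - 2x_i)$ (and their analogues at smaller flats) should yield, for each $k$, an explicit spanning set of the degree-$k$ syzygy module consisting of relations localized at rank-$k$ flats.

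The hard part --- and the reason this is stated as a conjecture --- is that no addition--deletion theorem or purely combinatorial criterion for $k$-formality is available for $k \ge 3$, so one cannot simply bootstrap from Theorem~\ref{thm:HAformal} or from the lc-basis of Proposition~\ref{prop:lcbasis}; the higher syzygies of $R(\CH_n)$ have to be controlled explicitly. Moreover Lemma~\ref{lem:genericloc} shows that for $n \ge 6$ the arrangement $\CH_n$ has generic rank-$3$ localizations, and these are not even formal, so the desired exactness is a genuinely global feature of $\CH_n$ and cannot be reduced to its localizations. I expect the cleanest realistic path to be the $B_n$-equivariant one --- set up the relation complex as a complex of $B_n$-modules, compute its homology orbit by orbit from the explicit description of $L(\CH_n)$, and verify the vanishing in each degree --- while developing a general combinatorial $k$-formality criterion in the spirit of \cite{moellermueckschroehre:formal} would be the more conceptual but probably harder alternative.
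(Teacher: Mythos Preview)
The statement you are attempting is a \emph{conjecture} in the paper, not a theorem: the paper gives no proof at all, only the remark that ``computational evidence for further non-free hyperpolygonal arrangements suggests'' it. So there is nothing on the paper's side to compare your proposal against.

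That said, your proposal is itself not a proof but a research plan, and you essentially acknowledge this (``the approach I would take'', ``I expect the cleanest realistic path to be''). The genuine gap is exactly the one you name: for $k\ge 3$ there is no known addition--deletion theorem, no lc-basis criterion, and no other combinatorial shortcut for $k$-formality, so your steps (b) and (c) --- decomposing each $R_X$ as a $B_n$-representation and then checking acyclicity of the resulting orbit-indexed complex --- are not routine bookkeeping but the entire content of the problem. In particular, step (a) is already nontrivial: a full $B_n$-orbit classification of all flats of $L(\CH_n)$ in every rank, together with the isomorphism type of each localization, is not available in the paper and is substantially more than what Proposition~\ref{prop:genCSG} provides. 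Your complementary ``iterate the line-closure argument one rank at a time'' suggestion does not obviously make sense for $k\ge 3$, since line-closure is a rank-$2$ phenomenon and the Brandt--Terao complex in higher degrees involves syzygies among relations, not relations among forms.

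One correct and useful observation you make is that the generic rank-$3$ localizations from Lemma~\ref{lem:genericloc} are not even formal, so $k$-formality of $\CH_n$ cannot be deduced locally; this is a real obstruction to any inductive or localization-based strategy and explains why the paper leaves the statement as a conjecture. In summary: your outline is a plausible attack, but it does not constitute a proof, and neither does the paper offer one.
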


\section{Rank-generating functions of the poset of regions}
\label{s:rankgenerating}

Let $\CA$ be a 
hyperplane arrangement in the real vector space $V=\BBR^\ell$. 
A \emph{region} of $\CA$ is a connected component of the 
complement $M(\CA) := V \setminus \cup_{H \in \CA}H$ of $\CA$.
Let $\RR := \RR(\CA)$ be the set of regions of $\CA$.
For $R, R' \in \RR$, we let $\CS(R,R')$ denote the 
set of hyperplanes in $\CA$ separating $R$ and $R'$.
Then with respect to a choice of a fixed 
base region $B$ in $\RR$, we can partially order
$\RR$ as follows:
\[
R \le R' \quad \text{ if } \quad \CS(B,R) \subseteq \CS(B,R').
\]
Endowed with this partial order, we call $\RR$ the
\emph{poset of regions of $\CA$ (with respect to $B$)} and denote it by
$P(\CA, B)$. This is a ranked poset of finite rank,
where $\rk(R) := |\CS(B,R)|$, for $R$ a region of $\CA$, 
\cite[Prop.\ 1.1]{edelman:regions}.
The \emph{rank-generating function} of $P(\CA, B)$ is 
defined to be the following polynomial in 
$\BBZ_{\ge 0}[t]$
\begin{equation*}
	\label{eq:rankgen}
	\zeta(P(\CA,B); t) := \sum_{R \in \RR}t^{\rk(R)}. 
\end{equation*}
This poset along with its rank-generating function
was introduced by Edelman 
\cite{edelman:regions}.

Thanks to work of  Bj\"orner, Edelman, and Ziegler 
\cite[Thm.~4.4]{bjoerneredelmanziegler}
(see also Paris \cite{paris:counting}), respectively  
Jambu and Paris \cite[Prop.~3.4, Thm.~6.1]{jambuparis:factored},
in case of a real arrangement $\CA$  
which is supersolvable, 
respectively inductively factored, 
there always exists a suitable base region $B$ so that 
$\zeta(P(\CA,B); t)$
admits a multiplicative decomposition which 
is determined by the exponents of $\CA$, i.e.
	\begin{equation}
	\label{eq:poinprod}
	\zeta(P(\CA,B); t) = \prod_{i=1}^\ell (1 + t + \ldots + t^{e_i}),
\end{equation}
	where $\{e_1, \ldots, e_\ell\} = \exp \CA$ is the 
set of exponents of $\CA$.

Quite remarkably
many classical real arrangements do satisfy the
factorization identity \eqref{eq:poinprod}, 
the most prominent ones being Coxeter arrangements.

Let $W = (W,S)$ be a Coxeter group with associated reflection arrangement 
$\CA = \CA(W)$ which consists of the reflecting hyperplanes of 
the reflections in $W$ in the real space $V=\BBR^n$, where $|S| = n$. 
The 
\emph{Poincar\'e polynomial} $W(t)$ of 
the Coxeter group $W$ is the polynomial in $\BBZ[t]$ defined by 
\begin{equation}
	\label{eq:poncarecoxeter}
	W(t) := \sum_{w \in W} t^{\ell(w)},
\end{equation}
where $\ell$ is the length function 
of $W$ with respect to $S$.
Then $W(t)$
coincides with the rank-generating function of the poset of regions $\zeta(P(\CA,B); t)$ of 
the underlying reflection arrangement 
$\CA = \CA(W)$ with respect to $B$ being the dominant Weyl chamber of $W$ in $V$; 
see \cite{bjoerneredelmanziegler} or \cite{jambuparis:factored}.

The following factorization of 
$W(t)$ is due to Solomon \cite{solomon:chevalley}:
\begin{equation}
	\label{eq:solomon}
	W(t) = \prod_{i=1}^n(1 + t + \ldots + t^{e_i}),
\end{equation}
where $\{e_1, \ldots, e_n\}$ is the 
set of exponents of $W$, i.e., the set of exponents of $\CA(W)$; see also \cite{macdonald:coxeter}. So by the comments above, \eqref{eq:solomon} coincides with the factorization in \eqref{eq:poinprod}.

Let $W$ be a Coxeter group again with reflection arrangement $\CA = \CA(W)$, 
let $X$ be a member of the intersection lattice $L(\CA)$, and consider the restricted reflection arrangement $\CA^X$. In general, $\CA^X$ is no longer a reflection arrangement. 
It was shown in \cite[Thm.~1.3]{moellerroehrle:nice}
that there always exists a suitable base region $B$ of $\CA^X$ in $X$ so that also 
$\zeta(P(\CA^X,B); t)$ satisfies \eqref{eq:poinprod}, with the exception of only three instances when $W$ is of type $E_8$.

Moreover, also the rank-generating function of the poset of regions $\zeta(P(\CA_\CI,B); t)$ for a so called \emph{ideal arrangement} $\CA_\CI$ also obeys the factorization identity \eqref{eq:poinprod}.
Ideal arrangements $\CA_\CI$ stem from ideals $\CI$ in the poset of positive roots associated to a Weyl group; see 
\cite{sommerstymoczko}, 
\cite{abeetall:weyl},
\cite{roehrle:ideal}, and \cite{abeetal:hess}.

We close with a comment on the
rank-generating function of the poset of regions 
of the free hyperpolygonal arrangements $\CH_n$.

\begin{remark}
	\label{rem:Hnzeta}
	It follows from Theorem \ref{thm:HA}(ii) and \cite[Prop.~3.4, Thm.~6.1]{jambuparis:factored} that 
	$\CH_n$ satisfies \eqref{eq:poinprod} for $n \le 3$, and for $\CH_4$ this follows from 
	Lemma \ref{lem:H4} and \eqref{eq:solomon}. One can check that in contrast \eqref{eq:poinprod} fails for $\CH_5$.
\end{remark}


\bigskip

\addcontentsline{toc}{section}{Acknowledgments}

\noindent {\bf Acknowledgments}: 
We are grateful to T.~Hoge and S.~Wiesner
for helpful discussions on material in this paper.
The research of this work was supported in part by
the DFG (Grant \#RO 1072/25-1 (project number: 539865068) to G.~R\"ohrle).


\bigskip

\bibliographystyle{amsalpha}

\newcommand{\etalchar}[1]{$^{#1}$}
\providecommand{\bysame}{\leavevmode\hbox to3em{\hrulefill}\thinspace}
\providecommand{\MR}{\relax\ifhmode\unskip\space\fi MR }
\providecommand{\MRhref}[2]{%
  \href{http://www.ams.org/mathscinet-getitem?mr=#1}{#2} }
\providecommand{\href}[2]{#2}


\end{document}